\documentclass[12pt,reqno]{amsart}

\usepackage{amssymb,amsthm}
\usepackage{amsfonts,cmtiup,comment,stmaryrd}

\usepackage{mathrsfs,cmtiup}

\makeatletter
\def\blfootnote{\xdef\@thefnmark{}\@footnotetext}
\makeatother

\newtheorem{theorem}{Theorem}[section]

\newtheorem{lemma}[theorem]{Lemma}
\newtheorem{proposition}[theorem]{Proposition}
\newtheorem{corollary}[theorem]{Corollary}

\theoremstyle{definition}

\newtheorem*{definition*}{Definition}

\let\leq=\leqslant
\let\geq=\geqslant
\setlength{\topmargin}{0cm} \setlength{\oddsidemargin}{-0.0cm}
\setlength{\evensidemargin}{-0.0cm}
\pagestyle{plain}
\textwidth=16cm
\textheight=23cm
\numberwithin{equation}{section}
\newcommand{\ed}{\end{document}}
\newcommand{\bp}{\begin{proof} }
\newcommand{\ep}{\end{proof} }
\newcommand{\N}{{\Bbb N}}

\begin{document}
\title{Compact groups\\ all elements of which are almost right Engel}

\author{E. I. Khukhro}
\address{Charlotte Scott Research Centre for Algebra,\newline\indent University of Lincoln, Lincoln, LN6 7TS, U.K., and\newline\indent
  Sobolev Institute of Mathematics, Novosibirsk, 630090, Russia}
\email{khukhro@yahoo.co.uk}

\author{P. Shumyatsky}

\address{Department of Mathematics, University of Brasilia, DF~70910-900, Brazil}
\email{pavel@unb.br}

\keywords{Compact groups; profinite groups; finite groups; Engel condition; locally nilpotent groups}
\subjclass[2010]{20D25, 20E18, 20F45}

\begin{abstract}
We say that an element $g$ of a group $G$ is almost right Engel if there is a finite set ${\mathscr R}(g)$ such that for every $x\in G$ all sufficiently long commutators $[...[[g,x],x],\dots ,x]$ belong to ${\mathscr R}(g)$, that is, for every $x\in G$ there is a positive integer $n(x,g)$ such that $[...[[g,x],x],\dots ,x]\in {\mathscr R}(g)$ if $x$ is repeated at least $n(x,g)$ times. Thus, $g$ is a right Engel element precisely when we can choose ${\mathscr R}(g)=\{ 1\}$.

We prove that if all elements of a compact (Hausdorff) group $G$ are almost right Engel, then
$G$ has a finite normal subgroup $N$ such that $G/N$ is locally nilpotent. If in addition there is a uniform bound $|{\mathscr R}(g)|\leq m$ for the orders of the corresponding sets, then the subgroup $N$ can be chosen of order bounded in terms of $m$. The proofs use the Wilson--Zelmanov theorem saying that Engel profinite groups are locally nilpotent and previous results of the authors about compact groups all elements of which are almost left Engel.
\end{abstract}
\maketitle

\section{Introduction}

A group $G$ is called an Engel group if for every $x,g\in G$ the equation $[x, {}_ng]=1$ holds for some $n=n(x,g)$. (Throughout the paper, we use the
abbreviated notation for the Engel commutator
$[a,{}_nb]=[a,b,b,\dots ,b]$, where $b$ is repeated $n$ times.)
Recall that a group is said to be locally nilpotent if every finite subset generates a nilpotent subgroup. Clearly, any locally nilpotent group is an Engel group. Wilson and Zelmanov \cite{wi-ze} proved the converse for profinite groups: any Engel profinite group is locally nilpotent. Later Medvedev \cite{med} extended this result to Engel compact (Hausdorff) groups.

In \cite{khu-shu162} we considered compact groups all of whose elements are almost left Engel in the following precise sense.
 An element $g$ of a group $G$ is \emph{almost left Engel} if there is a \emph{finite} set ${\mathscr E}(g)$ such that
 for every $x\in G$ there is a positive integer $l(x,g)$ such that
 $$
 [x,{}_ng]\in {\mathscr E}(g)\qquad \text{for all }n\geq l(x,g).
 $$
Thus, $g$ is a left Engel element precisely when we can choose ${\mathscr E}(g)=\{ 1\}$, and if we can choose ${\mathscr E}(g)=\{ 1\}$ for all $g\in G$, then $G$ is an Engel group.

We proved in \cite{khu-shu162} that compact groups all elements of which are almost left Engel are finite-by-(locally nilpotent). By a compact group we mean a compact Hausdorff topological group. In the case where there is a uniform bound $m$ for the cardinalities of all the sets ${\mathscr E}(g)$, the finite subgroup in question can be chosen of order bounded in terms of $m$ (and this quantitative result makes sense also for finite groups.)

In this paper we prove similar results for compact groups all elements of which are almost right Engel in the following sense.
An element $g$ of a group $G$ is \emph{almost right Engel} if there is a \emph{finite} set ${\mathscr R}(g)$ such that for every $x\in G$
there is a positive integer $r(x,g)$ such that
 $$[g,{}_nx]\in {\mathscr R}(g)\qquad \text{for all }n\geq r(x,g).
 $$
Thus, $g$ is a right Engel element precisely when we can choose ${\mathscr R}(g)=\{ 1\}$, and if we can choose ${\mathscr R}(g)=\{ 1\}$ for all $g\in G$, then $G$ is an Engel group.

\begin{theorem}\label{t-e}
Suppose that $G$ is a compact group all elements of which are almost right Engel.
Then $G$ has a finite normal subgroup $N$ such that $G/N$ is locally nilpotent.
\end{theorem}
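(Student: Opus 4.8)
The strategy is to deduce the result from the theorem of \cite{khu-shu162} on compact groups all of whose elements are almost left Engel, the bridge being an ``almost'' analogue of Heineken's classical lemma that the inverse of a right Engel element is a left Engel element. Since the hypothesis is imposed on \emph{every} element of $G$ and $g\mapsto g^{-1}$ permutes $G$, it suffices to show that the inverse of any almost right Engel element is almost left Engel: once this is known, every element of $G$ is almost left Engel, and \cite{khu-shu162} at once provides a finite normal subgroup $N$ with $G/N$ locally nilpotent. (For the bounded-order statement of the introduction one keeps track of cardinalities throughout: if $|{\mathscr R}(g)|\le m$ for all $g$, the analogous lemma would yield sets ${\mathscr E}(g^{-1})$ of order bounded in terms of $m$, and then the quantitative form of \cite{khu-shu162} applies.)

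First I would normalise the data. For each $g$ replace ${\mathscr R}(g)$ by the canonical finite set ${\mathscr R}(g)$ of all \emph{recurrent} Engel values, i.e.\ those $y=[g,{}_nx]$ that occur for infinitely many $n$; it is contained in the original set, hence finite, it is invariant under conjugation by $g$ (because $[g,{}_nx]^g=[g,{}_nx^g]$), and it still witnesses the almost right Engel property of $g$, since for each fixed $x$ the sequence $n\mapsto[g,{}_nx]$, lying eventually in a finite set, is eventually periodic with cycle inside ${\mathscr R}(g)$. The elementary identity $[x,g^{-1}]=[g,x]^{g^{-1}}$ and its iterates then let one rewrite left Engel commutators $[x,{}_kg^{-1}]$ through commutators built from $g$ and conjugates of $x$. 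The substantive point, however, is to prove that $g$, being almost right Engel in the compact group $G$, is in fact a right Engel element modulo a \emph{finite} normal subgroup --- equivalently, that the normal closure of the canonical set ${\mathscr R}(g)$ is finite. Granting this, the classical Heineken lemma, applied modulo that subgroup, shows that $g^{-1}$ is a left Engel element there, so $[x,{}_kg^{-1}]$ eventually lies in it, giving the required finite ${\mathscr E}(g^{-1})$.

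The main obstacle is precisely this finiteness. There is no reason a priori for the subgroup generated by ${\mathscr R}(g)$, let alone its normal closure, to be finite, and a naive iteration of the Heineken identity only makes things worse, producing nested commutators and conjugating elements that are unbounded powers of $g$. Establishing it will require the structure theory of compact groups together with the Wilson--Zelmanov theorem: I would first reduce to the profinite case, checking that the identity component is abelian (a nonabelian connected compact group has a simple compact Lie group as a quotient, in which one readily finds an element that is not almost right Engel, contradicting the hypothesis), and then, working along finite quotients and combining the eventual periodicity with Wilson--Zelmanov, confine the ``non-Engel part'' of $g$ to a finite normal subgroup. This is the step that genuinely parallels, and borrows from, the treatment of the almost left Engel case in \cite{khu-shu162}; the Heineken-type identities then serve only to flip the final conclusion from the right to the left Engel side.
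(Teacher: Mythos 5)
Your overall architecture is the right one, and it matches the paper's endgame: the paper also finishes by showing that every element of $G$ has a finite left Engel sink and then invoking the main theorem of \cite{khu-shu162}, and the flip from right to left is indeed done with Heineken's identity \eqref{r-l}. Your observation that the identity component must be abelian is also correct (and your suggested route via free subgroups of a simple compact Lie group would work as an alternative to the paper's Lemmas~\ref{l-div}, \ref{l-sandw} and the Garashchuk--Suprunenko theorem). But the proposal has a genuine gap at exactly the point you flag as ``the main obstacle'': you never prove that the normal closure of ${\mathscr R}(g)$ is finite, and the sketch you offer for it does not work. The suggestion to ``work along finite quotients'' is empty here, because in a finite quotient \emph{every} element automatically has a finite right Engel sink, so the hypothesis of Theorem~\ref{t-e} imposes no condition on any single finite quotient. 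To extract anything from finite quotients one needs a \emph{uniform} bound on the sinks, which is not part of the hypothesis; and even with such a bound the finite case is a substantial quantitative theorem (Theorem~\ref{t-finite}) resting on the classification of finite simple groups, Hall--Higman, and Baer's theorem. The Wilson--Zelmanov theorem alone cannot ``confine the non-Engel part to a finite normal subgroup'': it only converts the Engel condition into local nilpotency, and the whole difficulty is getting to a quotient where the Engel condition holds.

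The paper's actual route around this is worth noting because it is not per-element. In the profinite case it first produces an open normal pronilpotent subgroup (Proposition~\ref{p-pf1}) via an $FC$-group argument and a metabelian reduction that imports the almost \emph{left} Engel theorem of \cite{khu-shu162} through Lemma~\ref{l-metab}, and then runs an induction on $|G/F(G)|$ whose base case (simple top quotient) uses Lemma~\ref{l0} and Schur multipliers. Only after that, in the compact case, does one get a single finite-by-central normal subgroup $F$ (using Lemma~\ref{l-central}, which needs divisibility of $G_0$), and the finite left Engel sink of $g$ is $\bigcup_{f\in F}{\mathscr R}(g^{-1})^{fg}$, finite because $|F/Z(F)|<\infty$ --- not because the normal closure of ${\mathscr R}(g^{-1})$ was shown finite element by element. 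So your skeleton is salvageable, but the missing step is essentially the entire content of Sections~\ref{s-f} and \ref{s-pf}, and the specific mechanism you propose for it (finite quotients plus Wilson--Zelmanov) cannot supply it.
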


In Theorem~\ref{t-e} it also follows that there is a locally nilpotent subgroup of finite index -- just consider $C_G( N)$. If there is a uniform bound $m$ for the cardinalities of the sets ${\mathscr R}(g)$ in Theorem~\ref{t-e}, then the subgroup $N$ in the conclusion can be chosen to be of order bounded in terms of $m$ (Corollary~\ref{c-em}).

It is well-known that the inverse of a right Engel element is left Engel. But
it is unclear if an almost  right Engel element must be almost left Engel. It is only by virtue of our Theorem~\ref{t-e} that if all elements of a compact group are almost right Engel, then all elements are also almost left Engel (the converse also holds by the main result of \cite{khu-shu162}).

 The proof uses the Wilson--Zelmanov theorem on profinite Engel groups and the aforementioned results of the authors about compact groups all elements of which are almost left Engel. First the case of a finite group $G$ is considered, where the result must be quantitative: if there is a uniform bound $ |{\mathscr R}(g)|\leq m$ for all $g\in G$, then the order of the nilpotent residual $\gamma _{\infty}(G)=\bigcap _i\gamma _i(G)$ is bounded in terms of $m$ only (Theorem~\ref{t-finite}). Then Theorem~\ref{t-e} is proved for profinite groups. Finally, the result for compact groups is derived with the use of the structure theorems for compact groups.

First in \S\,\ref{s-1} we discuss some elementary properties of minimal subsets ${\mathscr E}(g)$ and ${\mathscr R}(g)$ in the definitions of almost left Engel and almost right Engel elements. We deal with finite groups in \S\,\ref{s-f}, with profinite groups in \S\,\ref{s-pf}, and consider the general case of compact groups in \S\,\ref{s-comp}.

Our notation and terminology is standard; for profinite groups, see, for example, \cite{wilson}.
A~subgroup (topologically) generated by a subset $S$ is denoted by $\langle S\rangle$. For a group $A$ acting by automorphisms on a group $B$ we use the usual notation for commutators $[b,a]=b^{-1}b^a$ and $[B,A]=\langle [b,a]\mid b\in B,\;a\in A\rangle$, and for centralizers $C_B(A)=\{b\in B\mid b^a=b \text{ for all }a\in A\}$ and $C_A(B)=\{a\in A\mid b^a=b\text{ for all }b\in B\}$.

Throughout the paper we shall write, say, ``$(a,b,\dots )$-bounded'' to abbreviate
``bounded above in terms of $a, b,\dots $ only''.

\section{Properties of Engel sinks}
\label{s-1}

If $g$ is an almost left Engel element of a group $G$, then there is a finite
set ${\mathscr E}(g)$ such that for every $x\in G$
there is a positive integer $l(x,g)$ such that
\begin{equation} \label{e-def}
[x,{}_ng]\in {\mathscr E}(g)\qquad \text{for any }n\geq l(x,g).
\end{equation}
If ${\mathscr E}'(g)$ is another set with the same property for possibly different numbers $l'(x,g)$, then ${\mathscr E}(g)\cap {\mathscr E}'(g)$ also satisfies the same condition with the numbers $l''(x,g)=\max\{l(x,g),l'(x,g)\}$. Hence there is a \emph{minimal} set satisfying the above condition, which we again denote by ${\mathscr E}(g)$ and call the \emph{left Engel sink of $g$}. Henceforth we shall always use the notation ${\mathscr E}(g)$ to denote the (minimal) finite left Engel sink of $g$ when it exists, and $l(x,g)$ the corresponding minimum numbers satisfying \eqref{e-def}. In cases where we need to consider the left Engel sink constructed with respect to a subgroup $H$ containing $g$, we write ${\mathscr E}_H(g)$.

Recall the characterization of ${\mathscr E}(g)$ obtained by considering the mapping $z\mapsto [z,g]$ for $z\in {\mathscr E}(g)$.

\begin{lemma}[{\cite[Lemma~2.1]{khu-shu162}}]\label{l-sink}
If an element $g$ of a group $G$ has a finite left Engel sink, then
$${\mathscr E}(g)=\{z\mid z=[z,{}_ng]\quad\text{for some }\; n\geq 1\}.$$
\end{lemma}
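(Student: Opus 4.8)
The plan is to prove the two inclusions separately; write $T=\{z\mid z=[z,{}_ng]\ \text{for some }n\geq 1\}$ for the set on the right-hand side.

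The inclusion $T\subseteq{\mathscr E}(g)$ is the easy direction. If $z=[z,{}_ng]$ for some $n\geq 1$, then applying the operator $y\mapsto[y,{}_ng]$ repeatedly gives $z=[z,{}_{kn}g]$ for every $k\geq 1$; choosing $k$ with $kn\geq l(z,g)$ and invoking the defining property of the sink yields $z=[z,{}_{kn}g]\in{\mathscr E}(g)$.

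For the reverse inclusion I would show that $T$ is itself a finite left Engel sink for $g$, and then appeal to the minimality of ${\mathscr E}(g)$: since the intersection of two sinks is again a sink, the minimal sink ${\mathscr E}(g)$ is contained in every sink, in particular in $T$. Finiteness of $T$ is immediate from the inclusion already proved. To see that $T$ absorbs all long Engel commutators, fix $x\in G$ and put $y_n=[x,{}_ng]$, so that $y_{n+1}=[y_n,g]$. For $n\geq l(x,g)$ all the $y_n$ lie in the finite set ${\mathscr E}(g)$, so there are $n_1<n_2$ with $y_{n_1}=y_{n_2}$; setting $d=n_2-n_1\geq 1$, a one-line induction on $j$ using $y_{m+1}=[y_m,g]$ shows the sequence is eventually periodic, $y_{n_1+j}=y_{n_1+j+d}$ for all $j\geq 0$. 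Hence for every $j\geq 0$
$$[y_{n_1+j},{}_dg]=[x,{}_{n_1+j+d}g]=y_{n_1+j+d}=y_{n_1+j},$$
so $y_{n_1+j}\in T$; thus $[x,{}_ng]\in T$ for all $n\geq n_1$, which is exactly what is needed.

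The one point requiring a little care — and it is precisely what the remark about the map $z\mapsto[z,g]$ is pointing at — is that the pigeonhole argument must be leveraged to conclude membership in $T$ for \emph{all} sufficiently large $n$, not merely for a single commutator $y_{n_1}$; this is achieved by the periodicity observation above, which rests on the elementary fact that shifting the index of the sequence $(y_n)$ commutes with applying $[\,\cdot\,,g]$. I do not anticipate any serious obstacle beyond organising these routine verifications.
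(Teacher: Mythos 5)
Your proof is correct and follows essentially the same route as the paper's argument (given here explicitly for the analogous right-sink Lemma~\ref{l-r-sink}, and cited from \cite{khu-shu162} for this one): the easy inclusion via iterating $z=[z,{}_ng]$, then showing the right-hand set is itself a sink by pigeonhole and eventual periodicity of $y_{n+1}=[y_n,g]$, and concluding by minimality. Your explicit periodicity step is a slightly more careful write-up of the paper's ``there must be repeats'' observation, but it is the same idea.
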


If an element $g$ of a group $G$ has a finite left Engel sink, then, by Lemma~\ref{l-sink}, for a subgroup $H$ containing $g$ we have ${\mathscr E}_H(g)= {\mathscr E}(g)\cap H$, and if $N$ is a normal subgroup of $G$, then for $\bar g=gN$ the left Engel sink ${\mathscr E}_{G/N}(\bar g)$ is the image of ${\mathscr E}(g)$ in $G/N$. These properties will be used throughout the paper without special references.

Similar observations can be made about right Engel sinks. Suppose that for an element $g$ of a group $G$ there is a finite
set ${\mathscr R}(g)$ such that for every $x\in G$
there is a positive integer $r(x,g)$ such that
\begin{equation} \label{r-def}
[g,{}_n x]\in {\mathscr R}(g)\qquad \text{for any }n\geq r(x,g).
\end{equation}
If ${\mathscr R}'(g)$ is another set with the same property for possibly different numbers $r'(x,g)$, then ${\mathscr R}(g)\cap {\mathscr R}'(g)$ also satisfies the same condition with the number $r''(x,g)=\max\{r(x,g),r'(x,g)\}$. Hence there is a \emph{minimal} set satisfying the above condition, which we again denote by ${\mathscr R}(g)$ and call the \emph{right Engel sink of $g$}. Henceforth we shall always use the notation ${\mathscr R}(g)$ to denote the (minimal) finite right Engel sink of $g$ when it exists, and $r(x,g)$ the corresponding minimum numbers satisfying \eqref{e-def}. In cases where we need to consider the right Engel sink constructed with respect to a subgroup $H$ containing $g$, we write ${\mathscr R}_H(g)$

\begin{lemma}\label{l-r-sink}
If an element $g$ of a group $G$ has a finite right Engel sink, then
\begin{equation}\label{r-sink}{\mathscr R}(g)=\{z\mid z=[g,{}_nx]=[g,{}_{n+m}x],\quad \; x\in G,\;\; n\geq 1,\;\;m\geq 1\}.
\end{equation}
\end{lemma}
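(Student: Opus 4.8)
The plan is to mimic the proof of Lemma~\ref{l-sink} by exploiting the obvious ``shift'' map on the right Engel sink, namely $z\mapsto [z,x]$ applied to elements $z\in{\mathscr R}(g)$ arising from a fixed $x\in G$. Denote by $S$ the set on the right-hand side of \eqref{r-sink}, i.e.\ the set of all $z$ that are ``recurrent'' along some sequence of Engel commutators: $z=[g,{}_nx]=[g,{}_{n+m}x]$ for some $x\in G$ and $n,m\geq 1$. The two inclusions ${\mathscr R}(g)\supseteq S$ and ${\mathscr R}(g)\subseteq S$ will be proved separately, the first being straightforward and the second using the minimality of ${\mathscr R}(g)$.

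For the inclusion $S\subseteq{\mathscr R}(g)$: if $z=[g,{}_nx]$ with $n\geq r(x,g)$, then $z\in{\mathscr R}(g)$ by definition, so it suffices to observe that any $z\in S$ actually occurs as $[g,{}_Nx]$ for arbitrarily large $N$. Indeed, if $z=[g,{}_nx]=[g,{}_{n+m}x]$, then applying the map $w\mapsto[w,{}_mx]$ repeatedly gives $z=[g,{}_{n+km}x]$ for every $k\geq 0$; choosing $k$ large enough that $n+km\geq r(x,g)$ shows $z\in{\mathscr R}(g)$.

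For the reverse inclusion ${\mathscr R}(g)\subseteq S$: I would show that $S$ itself satisfies the defining condition \eqref{r-def}, so that minimality of ${\mathscr R}(g)$ forces ${\mathscr R}(g)\subseteq S$. First, $S$ is finite because $S\subseteq{\mathscr R}(g)$ by the paragraph above. Now fix $x\in G$. For $n\geq r(x,g)$ the elements $[g,{}_nx]$ all lie in the finite set ${\mathscr R}(g)$, so by the pigeonhole principle there exist $n\geq r(x,g)$ and $m\geq 1$ with $[g,{}_nx]=[g,{}_{n+m}x]$; call this common value $z_0$. Applying $w\mapsto[w,x]$ repeatedly to the equation $z_0=[g,{}_{n}x]=[g,{}_{n+m}x]$, one sees that for every $j\geq 0$ the element $[g,{}_{n+j}x]$ equals $[g,{}_{n+j+m}x]$, hence each of $[g,{}_{n}x],[g,{}_{n+1}x],\dots,[g,{}_{n+m-1}x]$ is periodic along the sequence with period $m$ and therefore lies in $S$. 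Thus $[g,{}_Nx]\in S$ for all $N\geq n$, which verifies \eqref{r-def} for $S$ with $r'(x,g)=n$. Since this works for every $x\in G$ and $S$ is finite, minimality of ${\mathscr R}(g)$ gives ${\mathscr R}(g)\subseteq S$, completing the proof.

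The only point requiring a little care — and the closest thing to an obstacle — is the bootstrapping in the second inclusion: one must first establish $S\subseteq{\mathscr R}(g)$ (hence $S$ finite) before invoking minimality, so the two inclusions are not symmetric and must be carried out in the right order. Everything else is a routine application of the pigeonhole principle together with the functoriality of the commutator map $w\mapsto[w,x]$ along a fixed ``direction'' $x$, exactly as in the left Engel case of Lemma~\ref{l-sink}.
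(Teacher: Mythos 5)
Your argument is correct and is essentially the paper's own proof: both establish $S\subseteq{\mathscr R}(g)$ via the eventual periodicity forced by finiteness, then show $S$ is itself a right Engel sink (pigeonhole on the finite set ${\mathscr R}(g)$ plus forward-determinacy of $w\mapsto[w,x]$) and invoke minimality. You merely spell out in more detail the step the paper dismisses with ``Clearly'' and the periodicity bookkeeping; no substantive difference.
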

Of course, elements $x$ and numbers $m,n$ in \eqref{r-sink} vary for different $z$ and are not unique.

\bp
Clearly, elements \eqref{r-sink} belong to ${\mathscr R}(g)$. Furthermore, the set of all elements \eqref{r-sink} forms (some) right Engel sink of $g$.
Indeed, for any $x\in G$, we have $[g,{}_sx]\in {\mathscr R}(g)$ for some $s\geq 1$. As $s$ increases, the corresponding commutators remain in ${\mathscr R}(g)$, and since this set is finite, there must be repeats like $[g,{}_nx]=[g,{}_{n+m}x]$ for some $n\geq s$ and $m\geq 1$. Therefore the set of elements \eqref{r-sink} is equal to ${\mathscr R}(g)$ due to the minimality.
\ep

If an element $g$ of a group $G$ has a finite right Engel sink, then, by Lemma~\ref{l-r-sink}, for a subgroup $H$ containing $g$ we have ${\mathscr R}_H(g)\subseteq {\mathscr R}(g)\cap H$, and if $N$ is a normal subgroup of $G$, then for $\bar g=gN$ the right Engel sink ${\mathscr R}_{G/N}(\bar g)$ is equal to the image of ${\mathscr R}(g)$ in $G/N$. These properties will be used throughout the paper without special references.

\begin{lemma}\label{l-c-s}
Suppose that an element $g$ of a group $G$ has a finite right Engel sink of cardinality $|{\mathscr R}(g)|=m$. If $h\in C_G(g)$, then $h^{m!}$ centralizes
${\mathscr R}(g)$.
\end{lemma}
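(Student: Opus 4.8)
The plan is to show that the set $\mathscr{R}(g)$ is setwise invariant under conjugation by $h$, and then to invoke the elementary fact that a permutation of an $m$-element set has order dividing $m!$.

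First I would note that conjugation by $h$ is an automorphism $\psi$ of $G$, and that $\psi(g)=g$ because $h\in C_G(g)$. Since automorphisms commute with the formation of commutators, for every $x\in G$ and every $n\ge 1$ we get $\psi([g,{}_nx])=[\psi(g),{}_n\psi(x)]=[g,{}_n x^h]$.

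Next I would use the intrinsic description of the right Engel sink supplied by Lemma~\ref{l-r-sink}. If $z\in\mathscr{R}(g)$, choose $x\in G$ and $n,k\ge 1$ with $z=[g,{}_nx]=[g,{}_{n+k}x]$; applying $\psi$ yields $z^h=[g,{}_n x^h]=[g,{}_{n+k}x^h]$, whence $z^h\in\mathscr{R}(g)$ by the same lemma. Thus $\psi$ maps $\mathscr{R}(g)$ into itself, and since $\mathscr{R}(g)$ is finite of cardinality $m$, it maps $\mathscr{R}(g)$ \emph{onto} itself; that is, $\psi$ restricts to a permutation of the $m$-element set $\mathscr{R}(g)$.

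Finally, such a permutation has order dividing $m!$, so $\psi^{m!}$ --- conjugation by $h^{m!}$ --- fixes $\mathscr{R}(g)$ pointwise, which is precisely the assertion. The argument is short and I do not anticipate a genuine obstacle; the only point needing a moment's reflection is that the characterization of $\mathscr{R}(g)$ in Lemma~\ref{l-r-sink} is manifestly preserved by any automorphism of $G$ fixing $g$, and it is this that forces the $h$-invariance of the sink (note that the analogous statement would fail for the weaker, non-intrinsic defining property of a right Engel sink, since conjugating $x$ by $h$ changes the index $r(x,g)$).
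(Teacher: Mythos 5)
Your argument is correct and is essentially the paper's own proof: the paper likewise uses the intrinsic characterization of ${\mathscr R}(g)$ from Lemma~\ref{l-r-sink} to show that conjugation by $h$ permutes ${\mathscr R}(g)$, and then concludes (leaving implicit the observation you spell out) that the $m!$-th power of this permutation is the identity. No gaps; you have merely made the final step explicit.
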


\bp
For an element $h$ of the centralizer $C_G(g)$,
the equation $z=[g,{}_nx]=[g,{}_{n+m}x]$ implies $z^h=[g^h,{}_nx^h]=[g^h,{}_{n+m}x^h]=[g,{}_nx^h]=[g,{}_{n+m}x^h]$. By Lemma~\ref{l-r-sink} it follows that
${\mathscr R}(g)$
is invariant under conjugation by $h$. Hence the result.
\ep

The following lemma was proved by Heineken \cite{hei}.

\begin{lemma}[{\cite[12.3.1]{rob}}]\label{l-hei}
 If $g$ is a right Engel element of a group $G$, then $g^{-1}$ is a left Engel element.
\end{lemma}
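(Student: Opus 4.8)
The plan is to derive the lemma from a single commutator identity that converts a left-normed Engel commutator in $g^{-1}$ into a conjugate of a right-normed Engel commutator in $g$. Concretely, I would prove by induction on $n$ that
\[
[x,{}_n g^{-1}] = \bigl([g,{}_n x]\bigr)^{g^{-n}}\qquad\text{for all }x\in G\text{ and all }n\geq 1.
\]
The base case $n=1$ is the elementary identity $[v,g^{-1}]=v^{-1}gvg^{-1}=g\,[g,v]\,g^{-1}=[g,v]^{g^{-1}}$, valid for every $v\in G$.

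For the inductive step I would put $w_n=[x,{}_n g^{-1}]$, so that $w_{n+1}=[w_n,g^{-1}]=[g,w_n]^{g^{-1}}$ by the base identity applied to $v=w_n$. Substituting the inductive hypothesis $w_n=[g,{}_n x]^{g^{-n}}$ and using that conjugation by $g^{-n}$ is an automorphism fixing $g$ (so that $[g,a^{g^{-n}}]=[g,a]^{g^{-n}}$ for any $a$), one obtains
\[
w_{n+1}=\bigl[g,\,[g,{}_n x]^{g^{-n}}\bigr]^{g^{-1}}=\bigl([g,{}_{n+1} x]^{g^{-n}}\bigr)^{g^{-1}}=[g,{}_{n+1} x]^{g^{-(n+1)}},
\]
which completes the induction.

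Granting this identity the lemma is immediate: if $g$ is a right Engel element, then for each $x\in G$ there is $n=n(x,g)$ with $[g,{}_n x]=1$, whence $[x,{}_n g^{-1}]=1^{g^{-n}}=1$, so $g^{-1}$ is a left Engel element — in fact with the very same integers $n(x,g)$, so that the corresponding uniformly bounded variant would follow in exactly the same way. I expect the only genuine difficulty to be guessing the correct identity; once it is written down the argument is purely formal, and notably no hypothesis on $G$ (compactness or anything else) is needed.
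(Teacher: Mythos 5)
There is a genuine gap: the inductive step is wrong, and the identity you are trying to prove is false. Your base case $[v,g^{-1}]=[g,v]^{g^{-1}}$ is correct, but when you iterate it you get
\[
w_{n+1}=[g,w_n]^{g^{-1}}=\bigl[g,[g,\dots,[g,x]\dots]\bigr]^{g^{-(n+1)}},
\]
a \emph{right-normed} commutator with $g$ entering from the left each time, whereas $[g,{}_{n+1}x]$ is the \emph{left-normed} commutator $[[g,{}_nx],x]$ with $x$ entering from the right. Your step silently replaces $[g,[g,{}_nx]]$ by $[[g,{}_nx],x]$, and these are different elements: already for $n=2$ in the free group on $x,g$ one computes $[x,g^{-1},g^{-1}]=gx^{-1}g^{-1}xgx^{-1}gxg^{-2}=[g,[g,x]]^{g^{-2}}$, which is not equal to $[[g,x],x]^{g^{-2}}$. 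So the displayed identity fails, and moreover the vanishing of the right-normed commutators $[g,[g,\dots,[g,x]\dots]]$ is not what the right Engel hypothesis gives you (that hypothesis controls the left-normed $[g,{}_nx]$). As a sanity check, your argument would also prove the false converse with the same ease, which is a warning sign: right Engel implies left Engel for the inverse, but not conversely.

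The correct identity is Heineken's formula, quoted in the paper as \eqref{r-l}:
\[
[x,{}_{n+1}g]=[g^{-1},{}_ng^{x^{-1}}]^{xg}.
\]
The crucial structural difference from your attempt is that the repeated entry on the right-hand side is the conjugate $g^{x^{-1}}$, not $x$. One then applies the formula with $g^{-1}$ in place of $g$ and uses that the right Engel condition on $g$ holds for \emph{every} element of $G$ --- in particular for the conjugate $(g^{-1})^{x^{-1}}$ --- to conclude that $[x,{}_{n+1}g^{-1}]=1$ for large $n$. This quantification over all test elements is exactly the ingredient your version omits, and it is why the argument cannot be made to work with $x$ kept fixed as the repeated entry. (The paper itself does not reprove the lemma; it cites Heineken and Robinson, and records formula \eqref{r-l} for later use.)
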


 We shall also need the following formula from the proof of Lemma~\ref{l-hei}:
 \begin{equation}\label{r-l}
[x,{}_{n+1}g]=[g^{-1},{}_ng^{x^{-1}}]^{xg}
\end{equation}
 for any elements $x,g$ of any group $G$ and any positive integer $n$.

We now use this formula for a metabelian group in the following lemma, in which,  for greater generality, we consider not necessarily minimal  left and right Engel sinks.

\begin{lemma}\label{l-metab}
 If $G$ is a metabelian group, then a right Engel sink of the inverse $g^{-1}$ of an element $g\in G$ is a left Engel sink of $g$. In particular, if $g^{-1}$ has a finite right Engel sink ${\mathscr R}(g^{-1})$, then $g$ has a finite left Engel sink and ${\mathscr E}(g)\subseteq {\mathscr R}(g^{-1})$.
\end{lemma}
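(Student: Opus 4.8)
The statement to prove is Lemma~\ref{l-metab}: in a metabelian group $G$, a right Engel sink of $g^{-1}$ is a left Engel sink of $g$. The natural approach is to exploit formula~\eqref{r-l}, which rewrites the left Engel commutator $[x,{}_{n+1}g]$ as a conjugate of a right Engel commutator of $g^{-1}$. The key observation is that in a metabelian group the conjugating element can be absorbed, so that the displayed conjugate $[g^{-1},{}_ng^{x^{-1}}]^{xg}$ actually lies in the right Engel sink of $g^{-1}$ itself (not merely a conjugate of it).

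\textbf{Key steps.} First I would recall \eqref{r-l}: for all $x$ and all $n\geq 1$,
$$[x,{}_{n+1}g]=[g^{-1},{}_ng^{x^{-1}}]^{xg}.$$
Next, I would observe that since $G'$ is abelian and each iterated commutator $[g^{-1},{}_kg^{x^{-1}}]$ lies in $G'$ for $k\geq 1$, conjugation of such a commutator by any element of $G'$ is trivial, and conjugation by an arbitrary element $t\in G$ depends only on the coset $tG'$. Now for $k\geq 1$ one has $g^{x^{-1}}\equiv g \pmod{G'}$, hence $[g^{-1},{}_kg^{x^{-1}}]=[g^{-1},{}_k g]$ whenever $k\geq 2$ (both sides lie in $G'$ and one can replace $g^{x^{-1}}$ by $g$ modulo $G'$ at every step after the first). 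Actually the cleaner route: inside $G'$, conjugation by $g^{x^{-1}}$ equals conjugation by $g$, so $[g^{-1},{}_ng^{x^{-1}}]=[\,[g^{-1},g^{x^{-1}}],{}_{n-1}g\,]$ for $n\geq 2$; and then conjugating the whole thing by $xg$ — which modulo $G'$ is just $x$ — shows that $[x,{}_{n+1}g]=[g^{-1},{}_{n-1}g]^{y}$ for a suitable element $y$, with the right-hand side being a right Engel commutator of $g^{-1}$ of the form $[g^{-1},{}_m y']$ for appropriate $m$ and $y'$. I would then push this through carefully to conclude that every sufficiently long left Engel commutator $[x,{}_{n}g]$ equals some right Engel commutator $[g^{-1},{}_{n'}x']$, and hence lies in any right Engel sink ${\mathscr S}$ of $g^{-1}$. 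This gives that ${\mathscr S}$ is a left Engel sink of $g$.

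\textbf{Finishing and the quantitative clause.} Once the set-containment $[x,{}_n g]\in{\mathscr S}$ for all large $n$ is established, the first sentence of the lemma follows by definition of left Engel sink; taking ${\mathscr S}={\mathscr R}(g^{-1})$ the minimal right Engel sink, we get that $g$ has a finite left Engel sink, and by minimality of ${\mathscr E}(g)$ we obtain ${\mathscr E}(g)\subseteq {\mathscr R}(g^{-1})$, which is the ``in particular'' clause.

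\textbf{Main obstacle.} The delicate point is the bookkeeping in formula~\eqref{r-l}: one must correctly track how the conjugating automorphism $g^{x^{-1}}$ versus $g$ act on elements of the abelian group $G'$, and verify that after conjugating by $xg$ the result is genuinely of the shape $[g^{-1},{}_m x']$ — i.e., that the ``$x$-part'' can be arranged as a single repeated commutator entry rather than a product of several different conjugates. Getting the indices right (the shift between $n$ on the left and $n-1$ or $n$ on the right, and ensuring $n$ is large enough that $G'$-membership kicks in) is the only real content; everything else is formal.
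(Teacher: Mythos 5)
Your overall strategy is the right one -- start from formula \eqref{r-l} and use the fact that $G'$ is abelian to absorb the conjugation -- but the one step that carries all the content is exactly the step you do not actually perform, and the partial formulas you offer for it are incorrect. Concretely: $xg$ is \emph{not} congruent to $x$ modulo $G'$ (it is congruent to $xg$), and the asserted identity $[x,{}_{n+1}g]=[g^{-1},{}_{n-1}g]^{y}$ cannot be right, since $[g^{-1},g]=1$ makes its right-hand side trivial while the left-hand side is not in general. More importantly, your intermediate expression $[\,[g^{-1},g^{x^{-1}}],{}_{n-1}g\,]^{xg}$ is a \emph{left} Engel commutator of $g$ starting from the element $[g^{-1},g^{x^{-1}}]$; nothing you say converts it into a commutator of the form $[g^{-1},{}_{m}y']$ for a single fixed $y'$, which is what is needed for membership in a right Engel sink of $g^{-1}$. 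Working directly with $[x,{}_{n+1}g]=[g^{-1},{}_ng^{x^{-1}}]^{xg}$ does not suffice either: the inner bracket does lie in ${\mathscr R}(g^{-1})$ for large $n$, but the outer conjugation by $xg$ depends on the coset $xgG'$, which ranges over a possibly infinite set as $x$ varies, so one only obtains that a union of conjugates of the sink is a left Engel sink -- weaker than the claim that the sink itself is one.

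The missing idea, which is how the paper closes this gap, is to apply \eqref{r-l} not to $x$ but to $[x,g]$: writing $[x,{}_{n+2}g]=[[x,g],{}_{n+1}g]=[g^{-1},{}_ng^{[x,g]^{-1}}]^{[x,g]g}$, the conjugator is now $[x,g]g$ with $[x,g]\in G'$, and since the bracket lies in $G'$ and $G'$ is abelian, conjugation by $[x,g]$ is trivial; the remaining conjugation by $g$ is then absorbed into the commutator via $[g^{-1},{}_nh]^{g}=[g^{-1},{}_nh^{g}]$, giving the genuine right Engel commutator $[g^{-1},{}_ng^{[x,g]^{-1}g}]$ of $g^{-1}$ with a single repeated entry. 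With that substitution the rest of your argument (membership in any right Engel sink ${\mathscr S}$ of $g^{-1}$ for large $n$, hence ${\mathscr S}$ is a left Engel sink of $g$, and ${\mathscr E}(g)\subseteq{\mathscr R}(g^{-1})$ by minimality) goes through as you describe. As written, however, the proposal identifies the obstacle but does not overcome it, so it has a genuine gap.
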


\bp
For any $x\in G$ and $n\geq 1$, we transform formula \eqref{r-l} using the fact that $[x,g]$ commutes with any commutator:
\begin{align*}
[x,{}_{n+2}g]=[[x,g],{}_{n+1}g]&
=[g^{-1},{}_ng^{[x,g]^{-1}}]^{[x,g]g}\\
&= [g^{-1},{}_ng^{[x,g]^{-1}}]^{g}\\
&= [g^{-1},{}_ng^{[x,g]^{-1}g}].
\end{align*}
For all $n$ larger that some number depending on $x$ and $g$, the right-hand side belongs to a given right Engel sink of $g^{-1}$, which is therefore also a left Engel sink of $g$ in view of the left-hand side. The second statement obviously follows.
\ep

Another lemma for metabelian groups was proved before.

\begin{lemma}[{\cite[Lemma~2.3]{khu-shu162}}]\label{l-metab2} Let $G$ be a metabelian group and suppose that an element $g\in G$ has a finite left Engel sink. Then

{\rm (a)} ${\mathscr E}(g)$ is a normal subgroup contained in $G'$;

{\rm (b)} elements of ${\mathscr E}(g)$ in the same orbit under the map $z\to [z,g]$ have the same order.
\end{lemma}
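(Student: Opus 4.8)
The plan is to push everything down to the abelian group $A:=G'$. By Lemma~\ref{l-sink} we have ${\mathscr E}(g)=\{z\mid z=[z,{}_ng]\ \text{for some}\ n\ge 1\}$, and since $[z,{}_ng]\in G'=A$ for all $z$ and $n$, automatically ${\mathscr E}(g)\subseteq A$. The key observation is that the map $\varphi\colon A\to A$, $\varphi(z)=[z,g]$, is an \emph{endomorphism} of the abelian group $A$: from the identity $[z_1z_2,g]=[z_1,g]^{z_2}[z_2,g]$ and the fact that $z_2\in A$ centralizes $[z_1,g]\in A$ (as $A$ is abelian) we get $\varphi(z_1z_2)=\varphi(z_1)\varphi(z_2)$. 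Equivalently, regarding $A$ as a right $\mathbb{Z}G$-module via conjugation, $[z,{}_ng]=z(g-1)^n$, so $\varphi$ is multiplication by $g-1$ and ${\mathscr E}(g)$ is the set of $\varphi$-periodic points of $A$. Everything below is then formal once we know $A$ is abelian and $\varphi$ is an endomorphism of it.

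For part (a), I would first show ${\mathscr E}(g)$ is a subgroup: it is the directed union of the subgroups $\ker(\varphi^n-\mathrm{id})$, $n\ge 1$, this union being directed because $\ker(\varphi^n-\mathrm{id})\subseteq\ker(\varphi^{kn}-\mathrm{id})$ (as $\varphi^n-\mathrm{id}$ divides $\varphi^{kn}-\mathrm{id}$ in $\mathbb{Z}[\varphi]$); concretely, $\varphi^{n_i}(z_i)=z_i$ for $i=1,2$ forces $\varphi^{n_1n_2}(z_1z_2^{-1})=z_1z_2^{-1}$. For normality I would prove that $\varphi$ commutes with the conjugation action of every $x\in G$ on $A$. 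This is the single place where metabelianness is really used: writing $g^{x^{-1}}=gc$ with $c=[g,x^{-1}]\in A$ and applying $[z,gc]=[z,c]\,[z,g]^c$ together with $[z,c]=1$ and $[z,g]^c=[z,g]$ — both valid since $z,c,[z,g]$ all lie in the abelian group $A$ — one gets $[z,g^{x^{-1}}]=[z,g]$, hence
\[
\varphi(z^x)=[z^x,g]=[z,g^{x^{-1}}]^x=[z,g]^x=\varphi(z)^x
\]
for all $z\in A$, $x\in G$. Iterating, $\varphi^n(z^x)=\varphi^n(z)^x$, so $z\in{\mathscr E}(g)$ implies $z^x\in{\mathscr E}(g)$, and ${\mathscr E}(g)\trianglelefteq G$.

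For part (b), note that on ${\mathscr E}(g)$ the map $\varphi$ decomposes into finite cycles: each $z\in{\mathscr E}(g)$ has a least period $d\ge 1$ with $\varphi^d(z)=z$, and its $\varphi$-orbit $\{z_0,z_1,\dots,z_{d-1}\}$ with $z_j=\varphi^j(z)$ consists of $d$ distinct elements cyclically permuted by $\varphi$. Since $\varphi$ is a group endomorphism, for any $w$ of finite order $|\varphi(w)|$ divides $|w|$ (and if one element of an orbit has finite order then so do all of them); running once around the cycle yields $|z_0|\mid|z_{d-1}|\mid\cdots\mid|z_1|\mid|z_0|$, which forces all these orders to coincide (or to all be infinite). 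Thus any two elements of ${\mathscr E}(g)$ in the same orbit of $z\mapsto[z,g]$ have equal order.

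The only real obstacle is the normality half of (a): that ${\mathscr E}(g)$ is not just a subgroup but is $G$-invariant, which comes down to the identity $[z,g^{x^{-1}}]=[z,g]$ for $z\in G'$, i.e. to the fact that in a metabelian group conjugation by an element of $G'$ acts trivially on $G'$. Everything else is routine module theory over $\mathbb{Z}[\varphi]$ applied to the abelian group $G'$.
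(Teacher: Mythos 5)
Your proof is correct. Note that this paper gives no proof of the lemma at all --- it is imported verbatim from \cite[Lemma~2.3]{khu-shu162} --- so there is nothing here to compare against; your argument (that on the abelian normal subgroup $A=G'$ the map $\varphi\colon z\mapsto[z,g]$ is a $G$-equivariant endomorphism whose periodic points constitute ${\mathscr E}(g)$ by Lemma~\ref{l-sink}) is the natural one, and every step checks out, including the two points that actually need the metabelian hypothesis: the identity $[z_1z_2,g]=[z_1,g]^{z_2}[z_2,g]=[z_1,g][z_2,g]$ making $\varphi$ a homomorphism, and $[z,g^{x^{-1}}]=[z,g]$ for $z\in G'$, which gives $\varphi(z^x)=\varphi(z)^x$ and hence normality, after which the order statement in (b) follows from the divisibility chain around a finite $\varphi$-cycle.
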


\section{Finite groups}
\label{s-f}

Of course, in any finite group $G$ every element $g\in G$ has finite (minimal) right Engel sink ${\mathscr R}(g)$. A meaningful result must be of quantitative nature, and this is what we prove in this section. The following theorem will also be used in the proof of the results on profinite and compact groups.

\begin{theorem}\label{t-finite}
Let $G$ be a finite group, and $m$ a positive integer. Suppose that for every $g\in G$ the cardinality of the right Engel sink ${\mathscr R}(g)$ is at most $m$.
 Then $G$ has a normal subgroup $N$ of order bounded in terms of $m$ such that $G/N$ is nilpotent.
\end{theorem}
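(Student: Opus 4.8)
The plan is to reduce, via the classification-free tools available for finite groups, to the situation where one can apply the already-established quantitative result for \emph{almost left Engel} elements from \cite{khu-shu162}. First I would observe that it suffices to bound the order of the nilpotent residual $\gamma_\infty(G)$ in terms of $m$: setting $N=\gamma_\infty(G)$ gives a characteristic (hence normal) subgroup with $G/N$ nilpotent. Next, recall that the key obstacle to $\gamma_\infty(G)$ being small is controlled by the generalized Fitting subgroup and by primes dividing $|G|$. The strategy is: (i) handle the soluble radical, and in particular reduce to metabelian (or small-derived-length) sections where Lemma~\ref{l-metab} and Lemma~\ref{l-metab2} apply; (ii) bound the number and size of nonabelian composition factors.

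For step (i), the idea is that within any metabelian section $H$ of $G$ and any $g\in H$, the inverse $g^{-1}$ has right Engel sink of size at most $m$, so by Lemma~\ref{l-metab} $g$ has a left Engel sink with $|{\mathscr E}_H(g)|\le m$; thus every metabelian section of $G$ satisfies the almost-left-Engel hypothesis with the \emph{same} bound $m$. One then wants to leverage this locally: for a Sylow $p$-subgroup $P$ and any $g\in G$, the action of $g$ on $P$ (or on suitable abelian sections of $P$) is controlled because commutators $[g,{}_nx]$ eventually land in a set of size $\le m$; combining with coprime-action arguments (fixed-point subgroups, Hall--Higman type reductions) should force the relevant commutator subgroups $[P,\langle g\rangle]$ to be small, or more precisely force $g$ to act ``almost trivially'' on a bounded-index subsection. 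Lemma~\ref{l-c-s} enters here: powers $h^{m!}$ of centralizing elements centralize the sink, which lets one pass to bounded-index subgroups acting genuinely nilpotently.

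For step (ii), the nonabelian part: if $S$ is a nonabelian composition factor appearing in $G$, one picks an element $g$ mapping onto a suitable element of (a section isomorphic to) $S$ or its automorphism group, and argues that a finite right Engel sink of bounded size forces $[g,{}_nx]$ to stabilize inside a small set, which in a nonabelian simple group forces $x$ into a proper subgroup — this should bound $|S|$ and the number of such factors in terms of $m$, since a nonabelian simple group has elements generating it together with conjugates and these cannot all collapse into an $m$-element sink unless $|S|$ is bounded. Assembling (i) and (ii), $\gamma_\infty(G)$ is built from $m$-boundedly many $m$-bounded pieces, hence has $m$-bounded order. I expect the main obstacle to be step (i): making the passage from ``sinks are small in every metabelian section'' to ``the soluble residual is small'' genuinely quantitative and uniform, i.e. controlling how the bounds propagate through a derived series or a chief series of $G$ without accumulating an unbounded number of multiplications of $m$ — this is exactly where one must mimic carefully the analogous argument in \cite{khu-shu162} and use the sink characterization of Lemma~\ref{l-r-sink} together with Lemma~\ref{l-c-s} to keep everything controlled by $m$ and $m!$ alone rather than by invariants of $G$.
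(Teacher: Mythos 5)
Your overall architecture (reduce to bounding $|\gamma_\infty(G)|$, treat the soluble part via coprime actions on abelian sections, treat the nonabelian composition factors separately) matches the paper's, but the plan has genuine gaps at exactly the points where the real work lies. First, your step (ii) does not work as stated, and your opening claim that the reduction uses ``classification-free tools'' is wrong: the assertion that a nonabelian simple group $S$ with $|{\mathscr R}(g)|\le m$ for all $g$ must have $m$-bounded order is Lemma~\ref{l-simple} of the paper, and its proof genuinely invokes the classification --- one first shows (Lemma~\ref{l0}) that $|[P,g]|$ is $m$-bounded for any $p$-subgroup $P$ normalized by a $p'$-element $g$, and then checks against the known structure of alternating groups and groups of Lie type (via explicit subgroups such as $A\langle b\rangle\le A_{2\cdot 3^r}$ and the diagonal/unipotent configuration in $SL_2(q)$) that this forces the degree and the field size to be bounded. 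Your heuristic that conjugates of a generating element ``cannot all collapse into an $m$-element sink'' is not an argument; nothing in the sink condition directly constrains generation. Second, in step (i) you explicitly flag the main quantitative difficulty (propagating the bound through a normal series without accumulation) and leave it unresolved. The paper resolves it by a specific mechanism you do not identify: Lemma~\ref{l3} shows the exponent of $G/F(G)$ divides $m!$, the Hall--Higman theorems then bound the \emph{Fitting height} in terms of $m$, and one inducts on the Fitting height, handling the height-two case by Lemma~\ref{l-metan} together with Lemmas~\ref{l0} and~\ref{l2}. Without the exponent-to-Fitting-height step there is no bounded-length induction to run.

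Third, your final ``assembling (i) and (ii)'' step is missing an essential idea. Knowing that $|G/R(G)|$ is $m$-bounded and that every soluble subgroup has $m$-bounded nilpotent residual does not by itself bound $|\gamma_\infty(G)|$: the soluble radical $R(G)$ can be enormous, and the top quotient could a priori act on it with a large commutator subgroup. The paper bridges this by forming the normal subgroup $K=\prod_i\langle\gamma_\infty(R(G)\langle t_i\rangle)^G\rangle$ over a transversal $\{t_i\}$ of $R(G)$ in $G$, reducing to $K=1$, observing that then every element of $R(G)$ is a right Engel element of $G$, invoking Baer's theorem to place $R(G)$ in the hypercentre $\zeta_i(G)$, and finally applying the quantitative version of Baer's theorem (finite index of the hypercentre implies finite, bounded, $\gamma_\infty(G)$). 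This hypercentre argument is the key structural idea of the nonsoluble case and does not appear anywhere in your plan; Lemma~\ref{l-c-s}, which you propose to use instead, plays no role in the paper's proof of this theorem.
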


The conclusion of the theorem can also be stated as a bound in terms of $m$ for the order of the nilpotent residual subgroup $\gamma _{\infty}(G)$, the intersection of all terms of the lower central series (which for a finite group is of course also equal to some subgroup $\gamma _n(G)$).

First we recall or prove a few preliminary results. We shall use the following well-known properties of coprime actions: if $\alpha $ is an automorphism of a finite
group $G$ of coprime order, $(|\alpha |,|G|)=1$, then $ C_{G/N}(\alpha )=C_G(\alpha )N/N$ for any $\alpha $-invariant normal subgroup $N$, the equality $[G,\alpha ]=[[G,\alpha ],\alpha ]$ holds, and if $G$ is in addition abelian, then $G=[G,\alpha ]\times C_G(\alpha )$.

\begin{lemma}\label{l-Vg}
 If $V$ is an abelian subgroup of a finite group $G$, and $g\in G$ an element of coprime order normalizing $V$, then $$[V,g]={\mathscr E}_{V\langle g\rangle}(g)={\mathscr R}_{V\langle g\rangle}(g).$$
 \end{lemma}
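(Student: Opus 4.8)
The plan is to compute all three objects explicitly using the coprime-action decomposition $V = [V,g] \times C_V(g)$. Write $V = [V,g] \times C_V(g)$, and note that on the abelian group $[V,g]$ the automorphism $g$ acts fixed-point-freely (since $C_{[V,g]}(g)$ is a $g$-invariant direct complement to $[V,g]$ inside $V$, forcing $C_{[V,g]}(g)=1$), while it centralizes $C_V(g)$. Hence the map $z \mapsto [z,g] = z^{-1}z^g$ restricted to $[V,g]$ is a bijection of $[V,g]$ onto itself — it is the endomorphism $1-g$ of the abelian group $[V,g]$, whose kernel $C_{[V,g]}(g)$ is trivial. So for every $w \in [V,g]$ and every $x \in V\langle g\rangle$, iterating $\text{ad}\,g$ we get $[x,{}_n g] = [x_V,{}_n g]$ where $x_V$ is the $V$-component of $x$, and since $[V,g]$ is $g$-invariant and $\text{ad}\,g$ is a bijection on it, the commutators $[x,{}_n g]$ all lie in $[V,g]$ for $n\geq 1$ and in fact $[V,g]=\{[x,{}_n g]\mid x\in V\langle g\rangle,\ n\geq 1\}$.

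For the left Engel sink: since $\text{ad}\,g$ is a bijection on $[V,g]$, for any starting element its forward orbit under $z\mapsto[z,g]$ is a single cycle inside $[V,g]$, so the sequence $[x,{}_n g]$ is eventually periodic and its limit set is contained in $[V,g]$; conversely every element of $[V,g]$ equals $[x,{}_n g]$ for suitable $x$ (take $x$ to be the appropriate preimage under a power of $1-g$) and, being in a cycle, satisfies $z=[z,{}_n g]$ for some $n\geq 1$. By Lemma~\ref{l-sink} this says ${\mathscr E}_{V\langle g\rangle}(g)=[V,g]$. For the right Engel sink: here we vary $x$ and fix $g$ as the outer commutand, so $[g,{}_n x]$. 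Using that $V$ is normal and abelian and $g$ normalizes $V$, a direct check shows $[g,x]\in V$ for all $x\in V\langle g\rangle$ (as $[g,v]\in[V,g]\le V$ and $[g,g]=1$), and thereafter $[g,{}_n x]=[[g,x],{}_{n-1}x]$ stays in $V$; moreover modulo $C_V(g)$ the element $x$ acts as a power of $g$ does... — here I would instead argue more cleanly that the commutators $[g,{}_nx]$ run through exactly $[g,V]=[V,g]$ by the same bijectivity of $1-g$ on $[V,g]$ combined with the coprimeness giving eventual periodicity, so that by Lemma~\ref{l-r-sink}, ${\mathscr R}_{V\langle g\rangle}(g)=[V,g]$ as well.

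The main obstacle I anticipate is the right Engel sink computation: unlike the left sink, where a single map $z\mapsto[z,g]$ governs everything, the right sink involves the family of maps $z\mapsto[z,x]$ for $x$ ranging over $V\langle g\rangle$, and one must verify both that $[g,{}_1 x]\in[V,g]$ for \emph{all} such $x$ (not just $x\in V$) and that conversely every element of $[V,g]$ is reached and is recurrent. The clean way to handle this is to observe that for $x=vg^i$ with $v\in V$, conjugation by $x$ on the abelian normal subgroup $[V,g]$ agrees with conjugation by $g^i$ (since $v\in V$ centralizes $[V,g]\le V$), so $[g,{}_n x]=[g,x]^{(1-g^i)^{n-1}}$ effectively, and one reduces to analyzing powers of $1-g$ on $[V,g]$ again, where coprimeness makes $1-g$ invertible and the orbit structure transparent; I would spell out this reduction carefully, as it is the one genuinely non-formal point.
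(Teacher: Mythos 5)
Your treatment of the left Engel sink is essentially the paper's: both rest on $C_{[V,g]}(g)=1$ and the resulting bijectivity of $z\mapsto[z,g]$ on $[V,g]$, giving $[V,g]\subseteq{\mathscr E}_{V\langle g\rangle}(g)$ via Lemma~\ref{l-sink}. Where you genuinely diverge is the right Engel sink. The paper never computes ${\mathscr R}$ directly: it sandwiches, using the metabelian Lemma~\ref{l-metab} to get $[V,g]\subseteq{\mathscr E}_{V\langle g\rangle}(g)\subseteq{\mathscr R}_{V\langle g\rangle}(g^{-1})\subseteq[V,g^{-1}]=[V,g]$, forcing all three sets to coincide, and then replaces $g^{-1}$ by $g$ using $[V,g^{-1}]=[V,g]$. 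This costs one auxiliary lemma but avoids entirely the analysis of the maps $z\mapsto[z,x]$ for varying $x$. Your route is a direct computation of ${\mathscr R}_{V\langle g\rangle}(g)$, which is self-contained and arguably more transparent, but it forces you through exactly the point you flag as delicate.

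At that point your sketch, as written, contains an error that must be repaired. For $x=vg^i$ you correctly reduce $[g,{}_nx]$ to iterates of the endomorphism $w\mapsto[w,g^i]$ of $[V,g]$, but you then assert that ``coprimeness makes $1-g$ invertible'' as if this settled the case of general $i$. It does not: $C_{[V,g]}(g)=1$ does not imply $C_{[V,g]}(g^i)=1$. For instance, let $g$ have order $4$ and act on $V=C_3\times C_5$ by inversion on $C_3$ and by an automorphism of order $4$ on $C_5$; then $[V,g]=V$ and $C_{[V,g]}(g)=1$, yet $C_{[V,g]}(g^2)=C_3\neq 1$, so $1-g^2$ is not injective on $[V,g]$. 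The repair is cheap and needs no new idea: for the inclusion ${\mathscr R}_{V\langle g\rangle}(g)\subseteq[V,g]$ you need no invertibility at all, since every $[g,{}_nx]$ with $n\geq1$ already lies in the normal subgroup $[V,g]$; and for the reverse inclusion $[V,g]\subseteq{\mathscr R}_{V\langle g\rangle}(g)$ it suffices to take $x=vg$ only (so $i=1$), where $[g,vg]=[g,v]^{g}$ sweeps out all of $[V,g]$ as $v$ ranges over $V$ and the governing map really is the bijection $1-g$, making every such value recurrent in the sense of Lemma~\ref{l-r-sink}. With that correction your argument is complete and is a valid alternative to the paper's proof.
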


\bp
We have $C_{[V,g]}(g)=1$ because the action of $g$ on $V$ is coprime. Then $[V,g]=\{[v,g]\mid v\in [V,g]\}$ and therefore also $ [V,g]=\{[v,{}_ng]\mid v\in [V,g]\}$ for any $n$. Hence, $[V,g]$ is contained in ${\mathscr E}_{V\langle g\rangle}(g)$, which is contained in ${\mathscr R}_{V\langle g\rangle}(g^{-1})$ by Lemma~\ref{l-metab}. By Lemma~\ref{l-r-sink} we also clearly have ${\mathscr R}_{V\langle g\rangle}(g^{-1})\subseteq [V,g^{-1}]=[V,g]$. As a result,
 $[V,g]={\mathscr E}_{V\langle g\rangle}(g)={\mathscr R}_{V\langle g\rangle}(g^{-1})$.
 Since $[V,g^{-1}]=[V,g]$, we finally get $[V,g]={\mathscr E}_{V\langle g\rangle}(g)={\mathscr R}_{V\langle g\rangle}(g)$.
\ep

\begin{lemma}\label{l0}
Let $P$ be a $p$-subgroup of a finite group $G$, and $g\in G$ a $p'$-element normalizing $P$. Then the order of $[P,g]$ is bounded in terms of the cardinality of the right Engel sink ${\mathscr R}(g)$.
\end{lemma}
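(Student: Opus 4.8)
The plan is to reduce to the situation $P=[P,g]$ and then bound $|P|$ by an induction powered by the following principle, visible in Lemma~\ref{l-Vg}: a $g$-invariant abelian section on which $g$ acts without fixed points is reproduced \emph{in full} inside the right Engel sink.

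First I would record the reductions. Since $g$ acts coprimely on $P$, $[[P,g],g]=[P,g]$, so replacing $P$ by $Q:=[P,g]$ (which does not increase the relevant sink, as ${\mathscr R}_{Q\langle g\rangle}(g)\subseteq{\mathscr R}(g)$, while $[Q,g]=Q$) we may assume $P=[P,g]$; put $H=P\langle g\rangle$ and $m=|{\mathscr R}_H(g)|\le|{\mathscr R}(g)|$, and note ${\mathscr R}_H(g)\subseteq P$. Then I would show ${\mathscr R}_H(g)$ generates $P$: the section $P/\Phi(P)$ is abelian with $[P/\Phi(P),g]=P/\Phi(P)$, so Lemma~\ref{l-Vg}, applied in $(P/\Phi(P))\langle g\rangle$, identifies the image of ${\mathscr R}_H(g)$ in $P/\Phi(P)$ with $[P/\Phi(P),g]=P/\Phi(P)$; by the Burnside basis theorem ${\mathscr R}_H(g)$ generates $P$, and in particular $|P/\Phi(P)|\le m$, so $P$ is $(\log_2m)$-generated and $p\le m$. (Consequently $[P,g]\subseteq\langle{\mathscr R}(g)\rangle$, so by Lemma~\ref{l-c-s} the subgroup $C_P(g)^{m!}$ centralizes $[P,g]$; I expect this to be useful in the hard case below.)

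The heart of the argument is an induction on $|P|$. For $P\ne1$, pick a minimal $g$-invariant subgroup $M$ of $Z(P)$; it is elementary abelian, $\langle g\rangle$-irreducible, normal in $H$, and $[M,g]$ is either $M$ or $1$. If $[M,g]=M$, then $C_M(g)=1$, so by Lemma~\ref{l-Vg} we get $M={\mathscr R}_{M\langle g\rangle}(g)\subseteq{\mathscr R}_H(g)$; hence the fibre over the identity of the natural surjection ${\mathscr R}_H(g)\to{\mathscr R}_{(P/M)\langle\bar g\rangle}(\bar g)$ contains all of $M$, so $|{\mathscr R}_{(P/M)\langle\bar g\rangle}(\bar g)|\le m-(|M|-1)$. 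Since $[P/M,\bar g]=P/M$ and $|P/M|<|P|$, induction gives $|P/M|\le f(m-|M|+1)$, and a fast enough choice of $f$ (e.g. $f(j)=2^{j}$, using $|M|\le 2^{|M|-1}$) yields $|P|=|M|\,|P/M|\le f(m)$. The remaining possibility is that every minimal $g$-invariant subgroup of $Z(P)$ is centralized by $g$; a standard coprime-action argument (the $g$-fixed points of $\Omega_1(Z(P))$ control those of $Z(P)$) then forces $g$ to centralize all of $Z(P)$.

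The main obstacle is precisely this residual case: $P=[P,g]$, coprime, with $g$ centralizing $Z(P)$ — here $Z(P)\le C_P(g)$, and since $C_{P/P'}(g)=1$ one has $C_P(g)\le P'\le\Phi(P)$, so by Lemma~\ref{l-Vg} the sink ``does not see'' $Z(P)$ directly and passing to $P/Z(P)$ loses control of the lost factor. To overcome it I would try to either (i) bound the nilpotency class of $P$ in terms of $m$ — exploiting that $P$ is $(\log_2m)$-generated, that $p\le m$, that $C_{P/P'}(g)=1$, and the coprimality of the $g$-action on the lower central factors — and then invoke Zelmanov's solution of the restricted Burnside problem; or (ii) analyse, via Lemma~\ref{l-r-sink}, which higher commutators $[g,{}_nx]$ ($x\in P\langle g\rangle$, say with $x=vg$) actually land in ${\mathscr R}_H(g)$, and argue that they still meet enough cosets of the relevant central subgroups to force $|Z(P)|$ (equivalently $|C_P(g)|$) to be $m$-bounded — here Lemma~\ref{l-c-s} and the inclusion $[P,g]\subseteq\langle{\mathscr R}(g)\rangle$ should come into play. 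Extracting a bound on the $g$-fixed part of $[P,g]$ from the finiteness of the sink is where the real work lies; the rest is bookkeeping with Lemmas~\ref{l-Vg}, \ref{l-r-sink}, and \ref{l-c-s}.
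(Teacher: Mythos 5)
Your reduction to $P=[P,g]$ and the observation that $\bigl|[P,g]/\Phi([P,g])\bigr|\le m$ are correct, and the inductive step in the case where $g$ acts fixed-point-freely on a minimal central $g$-invariant subgroup is sound. But the proof is incomplete exactly where you say it is: the residual case $[Z(P),g]=1$ (more generally, $C_P(g)\ne 1$) is the whole difficulty, and neither of your two suggested escapes is carried out. Option (i) as stated does not work: Zelmanov's solution of the restricted Burnside problem bounds orders in terms of the number of generators and the \emph{exponent}, and you have no exponent bound here; what you would actually need is a bound on the nilpotency class together with P.~Hall's fact that the lower central factors of a nilpotent group have order bounded in terms of the abelianization. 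Option (ii) is only a declaration of intent. So as it stands this is a genuine gap, not a completed argument.

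The idea you are missing is to run the counting along the \emph{lower central series} of $Q=[P,g]$ rather than up from the centre. For each factor $U=\gamma_i/\gamma_{i+1}$ on which $g$ acts nontrivially, Lemma~\ref{l-Vg} gives $1\ne[U,g]={\mathscr R}_{U\langle g\rangle}(g)$, so $\gamma_i\setminus\gamma_{i+1}$ contains an element of ${\mathscr R}(g)$; since these layers are disjoint, at most $m$ factors are moved by $g$. The Three Subgroup Lemma then shows $g$ cannot centralize two consecutive nontrivial factors (from $[\gamma_i,g]\le\gamma_{i+1}$ and $[\gamma_{i+1},g]\le\gamma_{i+2}$ one deduces $[Q,\gamma_i]=\gamma_{i+1}\le\gamma_{i+2}$, forcing $\gamma_{i+1}=1$), so the class of $Q$ is at most $2m+1$. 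Combined with $|Q/Q'|\le m$ (Lemma~\ref{l-Vg} applied to $V=Q/Q'$) and Hall's bound, this gives the required $m$-bounded order of $[P,g]$ with no case analysis on $C_P(g)$ at all. I recommend replacing your induction by this argument.
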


\begin{proof}
For the abelian $p$-group $V=[P,g]/[P,g]'$ we have $V=[V,g]=
{\mathscr R}_{V\langle g\rangle}(g)$ by Lemma~\ref{l-Vg}. Thus, $|V|\leq |{\mathscr R}(g)|$.

Since $[P,g]$ is a nilpotent group, its order is bounded in terms of $|V|=|[P,g]/[P,g]'|$ and the nilpotency class of $[P,g]$. The proof will be complete if we show that, as a crude bound, the nilpotency class of $[P,g]$ is at most $2|{\mathscr R}(g)|+1$.

Let $\gamma_i$ denote the terms of the lower central series of $[P,g]$. We claim that the number of factors of the lower central series of $[P,g]$ on which $g$ acts nontrivially is at most $|{\mathscr R}(g)|$. Indeed,
for any such a factor $U=\gamma _i/\gamma _{i+1}$ we have $
1\ne [U,g]= {\mathscr R}_{U\langle g\rangle}(g)$. As a result, $\gamma_i\setminus \gamma _{i+1}$ contains an element of ${\mathscr R}(g)$, which proves the claim.

It remains to observe that $g$ cannot act trivially on two consecutive nontrivial factors of the lower central series of $[P,g]$. Namely, if
$[\gamma_i, g]\leq \gamma _{i+1}$ and $[\gamma_{i+1}, g]\leq \gamma _{i+2}$, then by the Three Subgroup Lemma the inclusions $[\gamma_i, g, [P,g]]\leq [\gamma _{i+1}, [P,g]]=\gamma _{i+2}$ and $[[P,g],\gamma_{i}, g]=[ \gamma _{i+1},g]\leq \gamma _{i+2}$ imply the inclusion $[g, [P,g],\gamma _i]=[[P,g],\gamma _i]=\gamma _{i+1}\leq \gamma _{i+2}$. The last inclusion implies that $\gamma_{i+1}=1$, since the group is nilpotent.
\end{proof}

We reproduce a lemma on coprime actions from \cite{khu-shu162}.

\begin{lemma}[{\cite[Lemma~3.3]{khu-shu162}}]\label{l2}
Let $V$ be an elementary abelian $p$-group, and $U$ a $p'$-group of
automorphisms of $V$. If $|[V,u]|\leq m$ for every $u\in U$, then $|[V,U]|$ is $m$-bounded, and therefore $|U|$ is also $m$-bounded.
\end{lemma}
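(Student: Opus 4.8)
The plan is to exploit the coprimality of the action to pass to a minimal configuration, and then to bound $\dim_{\mathbb F_p}[V,U]$ by an averaging argument over $U$ after extension of scalars. First, we may assume $V$, and hence $U$, is finite (the general case reduces to this). Since $p\nmid|U|$, the module $V$ is semisimple, so $V=[V,U]\times C_V(U)$ with $C_{[V,U]}(U)=1$; replacing $V$ by $[V,U]$ (still a faithful $U$-module) we may assume $V=[V,U]$ and $C_V(U)=1$. If $U=1$ then $V=1$ and we are done; otherwise fix $1\ne u\in U$, so $1\ne[V,u]$, whence $p\le m$, and, writing $d:=\lfloor\log_p m\rfloor\le\log_2 m$, every $[V,u]$ has $\mathbb F_p$-dimension at most $d$. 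It now suffices to bound $n:=\dim_{\mathbb F_p}V$ in terms of $m$: then $|V|=p^{n}\le m^{n}$ and $|U|\le|\operatorname{GL}_n(\mathbb F_p)|<p^{n^{2}}\le m^{n^{2}}$ are $m$-bounded, and undoing the reduction gives the statement for the original $V$ and $U$.

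Next, extend scalars: $\overline V=V\otimes_{\mathbb F_p}\overline{\mathbb F}_p$ is a faithful $\overline{\mathbb F}_pU$-module with $\overline V=[\overline V,U]$ (hence with no trivial composition factor), $\dim\overline V=n$, and $\dim[\overline V,u]=\dim[V,u]\le d$ for every $u$. If $W_1,\dots,W_t$ are the composition factors of $\overline V$ — all nontrivial irreducibles, with $\sum_i\dim W_i=n$ — then for each $u$, using $p\nmid|U|$,
\[
\dim[\overline V,u]=\sum_i\dim[W_i,u]=\sum_i\bigl(\dim W_i-\dim C_{W_i}(u)\bigr),
\]
so averaging over $u\in U$ gives $d\ge n-\sum_i\mathbb E_{u\in U}[\dim C_{W_i}(u)]$. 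Therefore it is enough to prove that $\mathbb E_{u\in U}[\dim C_W(u)]\le\tfrac12\dim W$ for every nontrivial irreducible $\overline{\mathbb F}_pU$-module $W$ with $p\nmid|U|$; granting this, $d\ge n-\tfrac12 n$, i.e.\ $n\le 2d$.

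To prove that inequality we may take $W$ faithful, by passing to $U/C_U(W)$ (which shrinks $U$ and leaves $\mathbb E_u[\dim C_W(u)]$ unchanged). If $Z:=Z(U)\ne 1$ it is easy: $Z$ acts on $W$ by scalars (Schur's lemma), faithfully, so $Z$ is cyclic with scalar image the group $\mu_{|Z|}$ of $|Z|$-th roots of unity, and for each coset $uZ$ of $Z$ in $U$,
\[
\sum_{z\in Z}\dim C_W(uz)=\sum_{\omega\in\mu_{|Z|}}\bigl(\text{multiplicity of }\omega\text{ as an eigenvalue of }u\text{ on }W\bigr)\le\dim W;
\]
summing over the $|U|/|Z|$ cosets gives $\mathbb E_{u\in U}[\dim C_W(u)]\le\dim W/|Z|\le\tfrac12\dim W$. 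The remaining case $Z(U)=1$ is where the real work lies, and is the step I expect to be the main obstacle. Here one analyses the structure of $U$ via a minimal normal subgroup $N$: if $N$ is an elementary abelian $q$-group, decompose $W$ into its $N$-eigenspaces — which Clifford's theorem says $U$ permutes transitively — bound $\dim C_W(u)$ by the number of $\langle u\rangle$-orbits on these eigenspaces (weighted by their common dimension), and be left with an averaging estimate for a transitive permutation action of $U$; if $N$ is a direct product of non-abelian simple groups, combine Clifford's theorem over $N$ with bounds on the average of $\dim C_{W'}(u)$ for faithful irreducibles $W'$ of simple groups. No pointwise bound is available — e.g.\ a transposition fixes a subspace of codimension one on the standard module of $S_n$ — so the inequality is genuinely an averaging statement, and the delicate point is controlling the contribution of the identity (for which the bound $\tfrac12\dim W$ is essentially tight) and of other elements with large fixed space.
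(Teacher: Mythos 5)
The paper itself gives no proof of this statement --- it is quoted verbatim from \cite[Lemma~3.3]{khu-shu162} as a preliminary fact --- so your attempt has to stand on its own, and as written it does not. Your reductions are all correct: passing to $V=[V,U]$ with $C_V(U)=1$ by coprimality, extending scalars, splitting into nontrivial irreducible composition factors, and the observation that everything follows from the inequality $\frac{1}{|U|}\sum_{u\in U}\dim C_W(u)\le\frac12\dim W$ for every nontrivial irreducible $W$; your treatment of the case $Z(U)\ne 1$ via scalar action of the centre and eigenspace counting is a complete and correct proof of that case. But the case $Z(U)=1$, which you yourself identify as ``where the real work lies'', is precisely the entire content of the lemma under your reduction, and it is not proved: you only outline a Clifford-theoretic strategy whose terminal cases (the averaging estimate for a transitive permutation action, and faithful irreducibles of groups whose minimal normal subgroup is a product of non-abelian simple groups) are left completely open. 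The inequality you need is a genuine theorem --- it is essentially the Guralnick--Mar\'oti result on the average dimension of fixed-point spaces --- and the known proofs of it in the non-central case rest on the classification of finite simple groups. So the proposal reduces an auxiliary lemma to a substantially harder theorem and stops there; as it stands there is a genuine gap, not merely a missing routine verification.

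A secondary concern is proportionality. The conclusion only needs \emph{some} $m$-bounded estimate for $|[V,U]|$, not the sharp constant $\tfrac12$ in an averaging inequality over all of $U$, and the source \cite{khu-shu162} presents the lemma as an elementary preliminary. Note that the hypothesis is equivalent to $|V:C_V(u)|\le m$ for every $u\in U$ (since $v\mapsto[v,u]$ is a homomorphism with kernel $C_V(u)$ and image $[V,u]$), so after your first reduction the task is to show that boundedly many of these centralizers already intersect trivially; an argument at that level, rather than via average fixed-space dimensions of irreducible modules, is the kind of elementary route the cited lemma calls for. If you instead import the averaging theorem as a black box, the lemma becomes correct but acquires a dependence on the classification that the statement does not warrant.
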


 Recall that the Fitting series starts with the Fitting subgroup $F_1(G)=F(G)$, and by induction, $F_{k+1}(G)$ is the inverse image of $F(G/F_k(G))$. If $G$ is a soluble group, then the least number $h$ such that $F_h(G)=G$ is the \textit{Fitting height} of $G$. The following lemma is well known and is easy to prove (see, for example, \cite[Lemma~10]{khu-maz}).

 \begin{lemma}\label{l-metan}
 If $G$ is a finite group of Fitting height 2, then $\gamma _{\infty}(G)=\prod _q [F_q,G_{q'}]$, where $F_q$ is a Sylow $q$-subgroup of $F(G)$, and $G_{q'}$ is a Hall ${q'}$-subgroup of $G$.
 \end{lemma}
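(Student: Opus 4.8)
The plan is to compute $\gamma_{\infty}(G)$ directly from the structure of a group of Fitting height~$2$. Write $F=F(G)$; since $G/F$ is nilpotent we have $R:=\gamma_{\infty}(G)\le F$, so $R$ is nilpotent and splits as the direct product of its Sylow subgroups, each of which is normal in $G$ and contained in the corresponding $F_q$. It therefore suffices to identify, for each prime $q$, the Sylow $q$-subgroup of $R$ with $M_q:=[F_q,G_{q'}]$, where $F_q=O_q(G)$ is the Sylow $q$-subgroup of $F$ and $G_{q'}$ a Hall $q'$-subgroup.

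The key step is to show that $M_q$ does not depend on the choice of $G_{q'}$ and is normal in $G$, by establishing the identity $M_q=O^q(G)\cap F_q$. Because $G/F$ is nilpotent, there is a normal subgroup $L\trianglelefteq G$ with $F\le L$, with $G/L$ a $q$-group and $L/F$ a $q'$-group; then $L/F_q$ is a $q'$-group, so $F_q$ is a normal Sylow $q$-subgroup of $L$ and a Hall $q'$-complement to it in $L$ is in fact a Hall $q'$-subgroup $G_{q'}$ of $G$, giving $L=F_q\rtimes G_{q'}$. A short manipulation with the operator $O^q$ then gives $O^q(G)=O^q(L)$ (both $G/L$ and $L/O^q(L)$ are $q$-groups, and $O^q(G)\le L$) and $O^q(L)=G_{q'}\,[F_q,G_{q'}]$ (this subgroup contains the $q'$-group $G_{q'}$, is normal in $L$, and has $q$-group quotient $F_q/[F_q,G_{q'}]$), whence $O^q(G)\cap F_q=[F_q,G_{q'}]=M_q\trianglelefteq G$.

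It remains to prove the two inclusions for $M:=\prod_q M_q\trianglelefteq G$. That $M\le R$ follows because in the nilpotent quotient $G/R$ the image of $F_q$ lies in the Sylow $q$-subgroup, the image of $G_{q'}$ lies in a Hall $q'$-subgroup, and these commute in a nilpotent group, so $M_q$ has trivial image. For the reverse inclusion one shows $G/M$ is nilpotent: since $M\le O^q(G)$ for every $q$, we get $O^q(G/M)=O^q(G)/M$, and comparing $q$-parts via $O^q(G)=G_{q'}[F_q,G_{q'}]$ and $M_q=[F_q,G_{q'}]$ shows $O^q(G)/M$ is a $q'$-group; thus $G/M$ has a normal $q$-complement for every prime $q$ and hence is nilpotent, so $R\le M$. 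Combining the inclusions yields $\gamma_{\infty}(G)=M=\prod_q[F_q,G_{q'}]$. The one genuinely delicate point is the identification $M_q=O^q(G)\cap F_q$ — in particular its normality and its independence of the chosen Hall subgroup; everything else is routine bookkeeping with $O^q$ and nilpotent groups.
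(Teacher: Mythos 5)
Your argument is correct. Note that the paper itself gives no proof of this lemma --- it is stated as well known with a reference to \cite[Lemma~10]{khu-maz} --- so there is no in-paper argument to compare against; your write-up is a sound self-contained verification. The key identification $[F_q,G_{q'}]=O^q(G)\cap F_q$ goes through as you describe: with $L$ the preimage of the Hall $q'$-part of the nilpotent group $G/F$, every Hall $q'$-subgroup of $G$ lies in $O^q(G)\le L$ and has index $|F_q|$ there, hence complements $F_q$ in $L$; then $O^q(L)=\langle G_{q'}^{F_q}\rangle=G_{q'}[F_q,G_{q'}]$ and $O^q(G)=O^q(L)$, giving normality in $G$ and independence of the chosen Hall subgroup simultaneously. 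The two inclusions are also fine: $M\le\gamma_{\infty}(G)$ because Sylow and Hall parts commute in the nilpotent quotient $G/\gamma_{\infty}(G)$, and $\gamma_{\infty}(G)\le M$ because $O^q(G)/M\cong G_{q'}/(G_{q'}\cap M)$ is a $q'$-group for every $q$, so $G/M$ has a normal $q$-complement for all $q$ and is nilpotent. The only hypothesis you use implicitly is solubility of $G$ (for the existence and conjugacy of Hall subgroups), which is guaranteed by Fitting height $2$.
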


We now approach the proof of Theorem~\ref{t-finite} with the following two lemmas, dealing separately with soluble and simple groups.

\begin{lemma}\label{l3}
If $G$ is a finite soluble group such that $|{\mathscr R}(g)|\leq m$ for all $g\in G$, then the exponent of $G/F(G)$ divides $m!$.
\end{lemma}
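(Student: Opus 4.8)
The plan is to prove that $g^{m!}\in F(G)$ for every $g\in G$, which is equivalent to the statement about the exponent. The first step is a reduction to $p$-elements. Writing $g$ as the product of its pairwise commuting primary components $g_p$ and using that $F(G)=\prod_pO_p(G)$ is nilpotent, one checks that $g^{m!}\in F(G)$ if and only if $g_p^{\,m!}\in O_p(G)$ for every prime $p$, i.e. if and only if the image of $g_p$ in $\bar G:=G/O_p(G)$ has order dividing $p^{v_p(m!)}$, where $v_p$ denotes the $p$-adic valuation. Since every $p$-element of $\bar G$ arises in this way, it suffices to show that an arbitrary $p$-element $\bar x$ of $\bar G$ has order at most $m$: a power of $p$ that does not exceed $m$ automatically divides $m!$.

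Next I would bring in the structure of finite soluble groups. As $\bar G$ is soluble and $O_p(\bar G)=1$, its Fitting subgroup $F(\bar G)=\prod_{r\ne p}O_r(\bar G)$ is a $p'$-group with $C_{\bar G}(F(\bar G))\le F(\bar G)$. Hence the cyclic $p$-group $\langle\bar x\rangle$, which meets $F(\bar G)$ trivially, acts faithfully by conjugation on $F(\bar G)$; since the centralisers $C_{\langle\bar x\rangle}(O_r(\bar G))$ form a chain (being subgroups of a cyclic group) and intersect trivially, one of them is already trivial, so $\bar x$ acts faithfully on $R:=O_r(\bar G)$ for some prime $r\ne p$, and then, by coprimality and the Frattini argument, faithfully on the elementary abelian section $W:=R/\Phi(R)$. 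Put $p^k=|\bar x|$ and let $\hat x$ be the (coprime) automorphism of $W$ induced by $\bar x$; it acts on $W$ with order exactly $p^k$.

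The heart of the argument is then a clash of two estimates for $|[W,\hat x]|$. On one hand, $W$ is an abelian normal subgroup of the quotient $\bar G/\Phi(R)$ of $G$ and $\hat x$ has coprime order, so Lemma~\ref{l-Vg} applies and, together with the observation after Lemma~\ref{l-r-sink}, gives $[W,\hat x]={\mathscr R}_{W\langle\hat x\rangle}(\hat x)\subseteq{\mathscr R}(\hat x)$, whence $|[W,\hat x]|\le m$. On the other hand, view $W$ as a semisimple $\mathbb F_r\langle\hat x\rangle$-module; since $\hat x$ acts on $W$ with order exactly $p^k$, some composition factor $W_0$ of $W$ is the irreducible module on which $\hat x$ acts as multiplication by a primitive $p^k$-th root of unity, so $\dim_{\mathbb F_r}W_0$ equals the multiplicative order $e$ of $r$ modulo $p^k$ and the action on $W_0$ is fixed-point-free; thus $[W,\hat x]\supseteq[W_0,\hat x]=W_0$ has order $r^e$. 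Comparing, $r^e\le m$, and since $p^k\mid r^e-1$ by the definition of $e$, we obtain $p^k\le r^e-1<m$, as required. I expect this last step to be the main obstacle: the hypothesis only bounds $[W,\hat x]$ for the \emph{single} element $\hat x$, and the crucial point is that a coprime automorphism of order $p^k$ necessarily moves a subspace of dimension at least $e=\operatorname{ord}_{p^k}(r)$, after which the elementary congruence $p^k\mid r^e-1$ does the rest. (One should also take some care with the reduction in the first paragraph, which relies on $F(G)$ being nilpotent so that membership in it is detected on primary components.)
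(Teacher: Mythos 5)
Your proof is correct, and its core coincides with the paper's: both arguments hinge on Lemma~\ref{l-Vg} applied to a coprime element acting faithfully on the Frattini quotient $W$ of a normal $r$-subgroup, which yields $|[W,\hat x]|\le m$. You diverge in two places. First, the reduction: the paper writes $F(G)=\bigcap_p O_{p',p}(G)$ and must bound the orders of $p$-elements and $p'$-elements of $G/O_{p',p}(G)$ separately, whereas your primary-component decomposition funnels everything into $p$-elements of $G/O_p(G)$, whose Fitting subgroup is automatically a $p'$-group; this is a clean simplification, and your chain argument locating a single $O_r(\bar G)$ on which $\bar x$ acts faithfully is sound. Second, the endgame: where you invoke semisimplicity, an irreducible summand $W_0\cong\mathbb F_r(\zeta)$ of dimension $e=\operatorname{ord}_{p^k}(r)$, and the congruence $p^k\mid r^e-1$ to conclude $p^k<m$, the paper gets by with the one-line observation that $\bar x$ acts faithfully on the \emph{set} $[W,\hat x]$ of at most $m$ elements (faithfulness because $W=[W,\hat x]\times C_W(\hat x)$ and $\bar x$ centralizes $C_W(\hat x)$), so its order divides $m!$ outright. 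Your field-theoretic step is correct and even gives the sharper bound $p^k<m$, but the point you flag as the main obstacle is dispatched by this elementary counting; both routes are valid.
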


\begin{proof}
Since
$$F(G)=\bigcap_p O_{p',p}(G),$$
it is sufficient to obtain a bound for the exponent of $G/O_{p',p}(G)$ for every prime $p$ dividing $|G|$. For a fixed such prime $p$, it is sufficient to obtain a bound in terms of $m$ for the orders of both $p'$- and $p$-elements of $G/O_{p',p}(G)$.

If $g$ is a $p'$-element of $G/O_{p',p}(G)$, then $g$ acts faithfully by conjugation on the Frattini quotient $V$ of $O_{p',p}/O_{p'}(G)$. By Lemma~\ref{l-Vg} we have $[V,g]={\mathscr R}_{V\langle g\rangle}(g)$ and therefore, $|[V,g]|\leq |{\mathscr R}(g)|\leq m$. Since $g$ acts faithfully on $[V,g]$, the order $|g|$ must divide $m!$.

Now let $g$ be a $p$-element of $G/O_{p',p}(G)$. Since $O_p(G/O_{p',p}(G))=1$, the group $\langle g\rangle $ acts faithfully on a Sylow $q$-subgroup $Q$ of $F(G/O_{p',p}(G))$ for at least one prime $q\ne p$. Then $\langle g\rangle $ also acts faithfully on the Frattini quotient $W=Q/\Phi (Q)$. By the same argument as above, $|[W,g]|\leq m$, and since $g$ acts faithfully on $[W,g]$, the order $|g|$ must divide $m!$.
\ep

\begin{lemma}\label{l-simple}
If $G$ is a finite non-abelian simple group such that $|{\mathscr R}(g)|\leq m$ for all $g\in G$, then the order of $G$ is bounded above in terms of $m$.
\end{lemma}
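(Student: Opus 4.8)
The plan is to bound $|G|$ by exhibiting inside any finite non-abelian simple group $G$ an element $g$ whose right Engel sink is large unless $|G|$ is small. The natural mechanism is the one already used in Lemma~\ref{l0} and Lemma~\ref{l3}: if $g$ is a $p'$-element normalizing an abelian $p$-group $V$ on which it acts nontrivially, then by Lemma~\ref{l-Vg} we have $[V,g]={\mathscr R}_{V\langle g\rangle}(g)\subseteq {\mathscr R}(g)$, so $|[V,g]|\le m$. Thus the hypothesis forces every coprime action occurring inside $G$ to have small commutator, and hence (by Lemma~\ref{l2}) small ``active'' part and small automorphism group. First I would note that the same hypothesis passes to every section $H/K$ with $H$ a subgroup of $G$ and $K\trianglelefteq H$, since right Engel sinks only shrink under passing to subgroups and to quotients; so it suffices to find, inside $G$, a section realizing a coprime action $\langle g\rangle$ on an elementary abelian $q$-group $V$ with $|[V,g]|$ large whenever $|G|$ is large.

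The concrete way to produce such sections is via the classification of finite simple groups, treating the three families (alternating, classical/exceptional of Lie type, sporadic) in turn. For $G=A_n$ with $n$ large, a long cycle or a product of two cycles acts on a natural permutation module over a suitable small prime with commutator of rank $\approx n$, giving $|[V,g]|$ unbounded; alternatively one can take $g$ of prime order $p$ acting on an elementary abelian $q$-subgroup of $A_n$ of large rank (e.g. inside a base group of a wreath product $C_q\wr C_p\le A_{pq}$) — here $|[V,g]|=q^{p-1}$ is already unbounded as $n\to\infty$. For $G$ of Lie type over $\mathbb{F}_{q}$ of rank $r$, one exploits a semisimple element $t$ of order coprime to $q$ normalizing a large unipotent subgroup $U$ (a root subgroup system), so that $[U/\Phi(U),t]$ is elementary abelian of $\mathbb{F}_q$-dimension growing with $r$ or with $\log q$; this forces both $r$ and $q$ bounded, leaving finitely many such $G$. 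The sporadic groups are a finite list and contribute nothing. Packaging this, $|G|$ is $m$-bounded.

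An alternative, more self-contained route avoiding a case-by-case trawl through the classification is to first bound the order of a Sylow $2$-subgroup and the $2$-rank of $G$: every elementary abelian $2$-section of $G$ has an automorphism of odd prime order $p$ acting with small commutator (take $g$ a $p$-element of $G$ acting on it), which by Lemma~\ref{l2} bounds the rank of that section; hence the sectional $2$-rank of $G$ is $m$-bounded, and by the classification of finite simple groups of bounded $2$-rank (Gorenstein–Harada) $G$ lies in a short explicit list parametrized by one field-size parameter, which is then pinned down by the same commutator estimate applied to a torus acting on a root subgroup. Either way, the hard part — and the only place the real work lies — is the reduction to the classification: one must argue that ``all coprime actions inside $G$ have bounded commutator'' is incompatible with $G$ being an arbitrarily large simple group, and this genuinely seems to require knowing the list of simple groups together with, for each family, an explicit large coprime configuration. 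I expect the main obstacle to be the Lie-type case, where one must simultaneously control the rank $r$ and the field size $q$; choosing the semisimple element $t$ and the unipotent subgroup $U$ so that $|[U/\Phi(U),t]|$ is provably large in \emph{all} types (including small-rank groups over large fields, where $t$ should be a torus element of order dividing $q\pm1$ acting on a single long root subgroup) is the delicate point.
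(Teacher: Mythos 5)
Your proposal is correct and follows essentially the same route as the paper: reduce the hypothesis to the statement that every coprime action inside $G$ has $m$-bounded commutator (via Lemma~\ref{l-Vg}/Lemma~\ref{l0}), then invoke the classification, bounding $n$ for alternating groups by exhibiting an explicit small coprime configuration (the paper uses an inverting involution on an elementary abelian $3$-group inside $A_{2\cdot 3^r}$ where you use a wreath product $C_q\wr C_p$, but these are interchangeable), bounding the Lie rank via the Weyl group and the field size via a torus element acting on a root subgroup (the paper does this inside an $SL_2(q)$ or $PSL_2(q)$ section, or a Suzuki group). The ``delicate point'' you flag in the Lie-type case is handled in the paper exactly as you suggest, with the existence of the relevant $SL_2$-type sections cited from the literature.
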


\bp
By Lemma~\ref{l0}, for any $p$-subgroup $P$ and a $p'$-element $g\in N_G(P)$, the order of $[P,g]$ is $m$-bounded. It follows from the classification and the structure of finite simple groups that this condition implies that the order of $G$ is $m$-bounded.

Indeed, we can assume that $G$ is either an alternating group or a group of Lie type. For an alternating group $G=A_n$ it is easy to see that $n$ is $m$-bounded. For example, in the semidirect product $A\langle b\rangle$ of an elementary abelian group $A$ of order $3^r$ and its group of automorphisms generated by an element $ b$ of order $2$ inverting all elements of $A$, we have $|{\mathscr R}(b)|=|A|=3^r$ by Lemma~\ref{l-Vg}, and $A\langle b\rangle$ embeds in $S_{3^r}$ with point stabilizer $\langle b\rangle$, and therefore in $A_{2\cdot 3^r}$. If $|{\mathscr R}(g)|\leq m$ for all $g\in A_n$, let $r$ be the least integer such that $m< 3^r$. Then we must have $n< 2\cdot 3^r$, in particular, $n<2\cdot 3^{[\log_3m]+1}=6m$.

Now let $G$ be a finite simple group of Lie type $G=L_n(\mathbb{F} _{q})$ of degree $n$ over a field of order $q$. Clearly, it is sufficient to show that both $n$ and $q$ are $m$-bounded. To obtain a bound for $n$, it suffices to consider the Weyl group, which, for large $n$, contains a subgroup isomorphic to a symmetric group of large degree, which in turn contains subgroups of type $[P,g]$ of large order as shown in the preceding paragraph.

To obtain a bound for $q=p^k$ in terms of $m$, it is sufficient to show that $G$ has a $p$-subgroup $P$ and a $p'$-element $g\in N_G(P)$ such that $[P,g]$ contains a subgroup isomorphic to the additive group of the field $\mathbb{F} _{q}$. This follows from the well-known facts about simple groups of Lie type. For example, we can use the fact that $G=L_n(\mathbb{F} _{q})$ either contains a subgroup isomorphic to $SL_2(q)$ or $PSL_2(q)$, or is a Suzuki group over $\mathbb{F} _{q}$. (Even stronger statements are proved in \cite{Liebeck--Nikolov--Shalev}). In $SL_2(p^k)$, put
$$
g=\begin{pmatrix} \zeta ^{-1}&0\\0&\zeta \end{pmatrix},
$$
where $\zeta $ is a nontrivial $p'$-element of the multiplicative group of the field $\mathbb{F} _{q}$ such that $\zeta ^2\ne 1$. (The latter condition can always be satisfied for $k>1$, or for $p>3$ if $k=1$.) This element normalizes and acts fixed-point-freely on the abelian $p$-subgroup of upper-triangular matrices
$$
T=\left\{\left.\begin{pmatrix} 1&a\\0&1\end{pmatrix}\right| a\in \mathbb{F} _{q}\right\},
$$
which is isomorphic to the additive group of $\mathbb{F} _{q}$. Since $T=[T,g]$, it follows that $q$ is $m$-bounded. In the quotient $PSL_2(p^k)$ of $SL_2(p^k)$ by the centre, the image of $T$ is isomorphic to $T$. Finally, the case of $G$ being a Suzuki group is dealt with in similar fashion, by considering the action of a diagonal $2'$-element on a Sylow $2$-subgroup. Thus, $q$ is $r$-bounded.
\ep

We are now ready to prove Theorem~\ref{t-finite}.

\begin{proof}[Proof of Theorem~\ref{t-finite}] Recall that $G$ is a finite group such that
$|{\mathscr R}(g)|\leq m$ for every $g\in G$. We need to show that $|\gamma _{\infty }(G)|$ is $m$-bounded.

First suppose that $G$ is soluble. Since $G/F(G)$ has $m$-bounded exponent by Lemma~\ref{l3}, the Fitting height of $G$ is $m$-bounded by the Hall--Higman theorems \cite{ha-hi}. Hence we can use induction on the Fitting height, with trivial base when the group is nilpotent and $\gamma _{\infty }(G)=1$. When the Fitting height is at least 2, consider the second Fitting subgroup $F_2(G)$. By Lemma \ref{l-metan} we have $\gamma _{\infty }(F_2(G))=\prod _q [F_q,H_{q'}]$, where $F_q$ is a Sylow $q$-subgroup of $F(G)$, and $H_{q'}$ is a Hall ${q'}$-subgroup of $F_2(G)$, the product taken over prime divisors of $|F(G)|$. For a given $q$, let $\bar H_{q'}=H_{q'}/C_{H_{q'}}(F_q)$, and let $V$ be the Frattini quotient $F_q/\Phi (F_q)$. Note that $\bar H_{q'}$ acts faithfully on $V$, since the action is coprime \cite[Satz~III.3.18]{hup}.

For every $x\in \bar H_{q'}$ the order $|[V,x]|$ is $m$-bounded by Lemma~\ref{l0}. Then $|\bar H_{q'}|$ is $m$-bounded by Lemma~\ref{l2}. As a result, $|[F_q , H_{q'}]|= |[F_q ,\bar H_{q'}]|$ is $m$-bounded, since $[F_q ,\bar H_{q'}]$ is the product of $m$-boundedly many subgroups $[F_q ,\bar h]$ for $h\in H_{q'}$, each of which has $m$-bounded order by Lemma~\ref{l0}.

For the same reasons, there are only $m$-boundedly many primes $q$ for which $[F_q , H_{q'}]\ne 1$. As a result, $|\gamma _{\infty }(F_2(G))|$ is $m$-bounded. Induction on the Fitting height applied to $G/\gamma _{\infty }(F_2(G))$ completes the proof in the case of soluble $G$.

Now consider the general case. First we show that the quotient $G/R(G)$ by the soluble radical is of $m$-bounded order.
Let $E$ be the socle
of $G/R(G)$. It is known that $E$ contains its centralizer in $G/R(G)$, so it suffices to show that $E$ has $m$-bounded order. In the quotient by the soluble radical, $E=S_1\times\dots\times S_k$ is a direct product of non-abelian finite simple groups $S_i$. By Lemma~\ref{l-simple}, every $S_i$ has $m$-bounded order, and it remains to show that the number of factors is also $m$-bounded. By Schmidt's theorem \cite[Satz~III.5.1]{hup}, every $S_i$ has a non-nilpotent soluble subgroup $R_i$, for which $\gamma _{\infty} (R_i)\ne 1$. Applying the already proved theorem for soluble groups to $T=R_1\times \dots \times R_k$ we obtain that $|\gamma _{\infty} (T)|$ is $m$-bounded, whence the number of factors is $m$-bounded.

Thus, $|G/R(G)|$ is $m$-bounded. Let $g\in G$ be an arbitrary element. The subgroup $R(G)\langle g\rangle$ is soluble, and therefore $|\gamma _{\infty }(R(G)\langle g\rangle)|$ is $m$-bounded by the above. Since $\gamma _{\infty }(R(G)\langle g\rangle)$ is normal in $R(G)$, its normal closure $\langle \gamma _{\infty }(R(G)\langle g\rangle) ^G\rangle$ is a product of at most $|G/R(G)|$ conjugates, each normal in $R(G)$, and therefore has $m$-bounded order. Choose a transversal $\{t_1,\dots, t_k\}$ of $G$ modulo $R(G)$ and set
$$
K=\prod _i\langle \gamma _{\infty }(R(G)\langle t_i\rangle) ^G\rangle,
$$
which is a normal subgroup of $G$ of $m$-bounded order. It is sufficient to obtain an $m$-bounded estimate for $|\gamma _{\infty }(G/K)|$. Hence we can assume that $K=1$. Then
\begin{equation*}
[R(G), g, \dots , g]=1\qquad \text{for any } g\in G,
 \end{equation*}
 when $g$ is repeated sufficiently many times. Indeed, $g\in R(G)t_i$ for some $t_i$, and the subgroup $R(G)\langle t_i\rangle$ is nilpotent due to our assumption that $K=1$.

This means that $R(G)$ consists of right Engel elements. By Baer's theorem \cite[12.3.7]{rob}, therefore $R(G)$ is contained in the hypercentre of $G$ equal to some term of the upper central series $\zeta _ i(G)$. Hence the index $|G:\zeta _ i(G)|$ is $m$-bounded. By a quantitative version of Baer's theorem \cite[14.5.1]{rob}, the order $|\gamma _{\infty}(G)|$ is also $m$-bounded. (A~quantitative version of this theorem of Baer can be extracted from the original proof in \cite{baer} or \cite[14.5.1]{rob}, see also a remark after Theorem~2.2 in \cite{fer-mor}.)
\end{proof}

\section{Profinite groups}\label{s-pf}

In this and the next sections, unless stated otherwise, a subgroup of a topological group will always mean a closed subgroup, all homomorphisms will be continuous, and quotients will be by closed normal subgroups. This also applies to taking commutator subgroups, normal closures, subgroups generated by subsets, etc. Of course, any finite subgroup is automatically closed. We also say that a subgroup is generated by a subset $X$ if it is generated by $X$ as a topological group.

In this section we prove Theorem~\ref{t-e} for profinite groups, while Corollary~\ref{c-em} for profinite groups is an immediate corollary of Theorem~\ref{t-finite}.

\begin{theorem}\label{t-eprof}
Suppose that $G$ is a profinite group all elements of which are almost right Engel.
Then $G$ has a finite normal subgroup $N$ such that $G/N$ is locally nilpotent.
\end{theorem}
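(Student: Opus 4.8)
The plan is to reduce the profinite case to the finite case (Theorem~\ref{t-finite}) by an inverse-limit argument, together with the Wilson--Zelmanov theorem and the known results on almost left Engel elements from \cite{khu-shu162}. First I would fix a base of open normal subgroups $\{U_i\}$ of $G$ and consider the finite quotients $G/U_i$. For each $i$, every $\bar g\in G/U_i$ is the image of some $g\in G$, and by the observation after Lemma~\ref{l-r-sink} the right Engel sink ${\mathscr R}_{G/U_i}(\bar g)$ is the image of ${\mathscr R}(g)$, hence finite; but to apply Theorem~\ref{t-finite} I need a \emph{uniform} bound on $|{\mathscr R}_{G/U_i}(\bar g)|$ as $\bar g$ ranges over $G/U_i$ and as $i$ varies. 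This is not given a priori, so the first substantive step is to establish such a bound.

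To get the uniform bound I would use a Baire category / compactness argument on $G$: for each positive integer $m$ let $X_m=\{g\in G:|{\mathscr R}(g)|\le m\}$. Using the characterisation \eqref{r-sink} of ${\mathscr R}(g)$ and continuity of commutator maps one checks each $X_m$ is closed, and $G=\bigcup_m X_m$ since every element is almost right Engel. By the Baire category theorem some $X_m$ has nonempty interior, so contains a coset $gW$ of an open normal subgroup $W$. Then, just as in the left-Engel paper \cite{khu-shu162}, one shows that the elements of $W$ itself have boundedly finite right Engel sinks (using that for $w\in W$, $[g,{}_n x]$ and $[gw,{}_n x]$ differ controllably, or by passing to $\langle g,W\rangle$ and arguing inside it); alternatively one argues that $G$ is covered by finitely many translates of $X_m$ and plays these off against each other. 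The upshot is a single integer $M$ (depending on $G$) with $|{\mathscr R}(g)|\le M$ for all $g$ in some open normal subgroup $H$, and then, enlarging $M$, for all $g\in G$. (The bound on all of $G$, rather than just on $H$, is what will feed Theorem~\ref{t-finite}; getting from ``bounded on $H$'' to ``bounded on $G$'' is the step I expect to require the most care, since elements outside $H$ can have larger sinks — but the index $|G:H|$ is finite, so one can absorb a controlled blow-up, or first prove the structural conclusion for $H$ and then handle $G/H$ separately.)

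With the uniform bound $|{\mathscr R}(\bar g)|\le M$ in hand for every finite quotient $G/U_i$, Theorem~\ref{t-finite} gives a normal subgroup $N_i/U_i$ of $G/U_i$ with $|N_i/U_i|\le f(M)$ (an $M$-bounded function independent of $i$) and $(G/U_i)/(N_i/U_i)$ nilpotent. Taking $N=\bigcap_i N_i$ (or passing to a suitable inverse limit of the $N_i$, which form a compatible-up-to-bounded-index system), one obtains a closed normal subgroup $N$ of $G$ with $|N|\le f(M)$ finite, and $G/N$ is an inverse limit of nilpotent groups. An inverse limit of nilpotent groups of \emph{unbounded} class need not be locally nilpotent, so the final step is to upgrade ``$G/N$ is pro-nilpotent-ish'' to ``$G/N$ is locally nilpotent'': here one shows $G/N$ is an Engel profinite group and invokes Wilson--Zelmanov. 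Concretely, for $\bar g\in G/N$ the sink ${\mathscr R}_{G/N}(\bar g)$ is the image of ${\mathscr R}(g)$, which is finite; I would argue that in fact ${\mathscr R}_{G/N}(\bar g)=\{1\}$ — i.e. every element of $G/N$ is right Engel — by noting that $N$ was chosen to contain all these sinks (each ${\mathscr R}(g)$ lies in the relevant $N_i$ since the nilpotent residual of $G/U_i$ does), so that modulo $N$ all long Engel commutators vanish. Hence $G/N$ is a (right, equivalently by Heineken's Lemma~\ref{l-hei} and local considerations, two-sided) Engel profinite group, so locally nilpotent by \cite{wi-ze}.

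One more point to address in the write-up: Theorem~\ref{t-finite} produces an $N_i$ with $G/U_i$ modulo it \emph{nilpotent}, but a priori of growing class, so I should be slightly careful that the $N_i$ can be chosen coherently; the clean way is to instead invoke the bound on $|\gamma_\infty(G/U_i)|$ stated after Theorem~\ref{t-finite}, set $N=\overline{\gamma_\infty(G)}$, note $|N|=\sup_i|\gamma_\infty(G/U_i)|\le f(M)$ is finite, and observe $G/N$ has trivial nilpotent residual in every finite quotient, hence is pro-(nilpotent) and Engel, so Wilson--Zelmanov applies. The main obstacle, as indicated, is the very first reduction — extracting a uniform cardinality bound on Engel sinks across all finite quotients from the mere hypothesis that each element individually has \emph{some} finite sink; everything after that is either a direct appeal to Theorem~\ref{t-finite} or to the Wilson--Zelmanov theorem.
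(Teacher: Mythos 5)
There is a genuine gap, and it sits exactly where you predicted: the extraction of a uniform bound $|{\mathscr R}(g)|\le M$ valid for all $g\in G$. Your Baire category step itself is salvageable (the sets $X_m=\{g:|{\mathscr R}(g)|\le m\}$ are indeed closed, because ${\mathscr R}_{G/U}(\bar g)$ is the image of ${\mathscr R}(g)$ and a finite set is faithfully separated by some open normal subgroup), so some $X_m$ does contain a coset $gW$ of an open subgroup. But everything after that is asserted rather than proved, and the suggested mechanisms do not work: for right Engel commutators there is no controllable relation between $[w,{}_nx]$, $[g,{}_nx]$ and $[gw,{}_nx]$ (the identity $[gw,x]=[g,x]^w[w,x]$ does not iterate usefully), the property $|{\mathscr R}(\cdot)|\le m$ is not preserved under translation so covering $G$ by finitely many translates of $gW$ gives nothing, and passing from ``bounded on an open subgroup $H$'' to ``bounded on $G$'' cannot be done by absorbing a finite index, since elements outside $H$ could a priori have arbitrarily large sinks. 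Note that the uniform bound you are after is \emph{equivalent} to the theorem (a posteriori ${\mathscr R}(g)\subseteq N$ for all $g$, so $|{\mathscr R}(g)|\le|N|$), so reducing to it without an independent proof is circular; the entire content of Theorem~\ref{t-eprof}, as opposed to Corollary~\ref{c-em}, is precisely the absence of a given uniform bound. Once a uniform bound is granted, your endgame ($N=\gamma_\infty(G)$ finite of order $\le f(M)$ by Theorem~\ref{t-finite} applied to finite quotients, then $G/N$ pronilpotent with finite sinks, hence Engel, hence locally nilpotent by Wilson--Zelmanov) is correct and is essentially how the paper derives Corollary~\ref{c-em}; but that is the easy half.

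The paper never establishes a uniform bound and instead argues elementwise. For each $g$ it picks an open normal $N_g$ with ${\mathscr R}(g^{-1})\cap N_g=\{1\}$, so that $g^{-1}$ is right Engel, hence $g$ left Engel, in $N_g\langle g\rangle$ (Lemma~\ref{l-hei}), making $[N_g,g]$ pronilpotent by Baer's theorem; the normal closures of these subgroups land in the largest normal pronilpotent subgroup $K$, the quotient $G/K$ is an $FC$-group with finite derived subgroup, and in a resulting metabelian section right sinks dominate left sinks (Lemma~\ref{l-metab}), so the left-Engel theorem of \cite{khu-shu162} yields an open normal pronilpotent subgroup $F(G)$. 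The finiteness of $\gamma_\infty(G)$ is then proved by induction on $|G/F(G)|$, with the simple-quotient case handled via Lemma~\ref{l0} applied to Sylow subgroups of $F(G)$ and a Schur multiplier argument. If you want to keep your architecture, you would need to supply a genuinely new argument for the uniform bound; otherwise the proof has to proceed along structural lines as above.
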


Recall that pronilpotent groups are defined to be inverse limits of finite nilpotent groups.

\begin{lemma}\label{l-p-n}
If all elements of a pro\-nil\-po\-tent group $G$ are almost right Engel, then the group $G$ is locally nilpotent.
\end{lemma}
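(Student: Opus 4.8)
The plan is to show that the hypothesis forces \emph{every} element of $G$ to be a genuine right Engel element, and then to invoke the Wilson--Zelmanov theorem. The mechanism is that a pronilpotent group, being an inverse limit of finite nilpotent groups, has every open normal subgroup $U$ containing the kernel of one of the defining projections, so each finite quotient $G/U$ is nilpotent. Feeding this into the description of the right Engel sink in Lemma~\ref{l-r-sink} makes the sinks collapse to $\{1\}$.

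Concretely, I would proceed as follows. Fix $g\in G$ and let $z\in{\mathscr R}(g)$. By Lemma~\ref{l-r-sink} there are $x\in G$ and integers $n,m\geq 1$ with $z=[g,{}_nx]=[g,{}_{n+m}x]$; since $[g,{}_{n+m}x]=[[g,{}_nx],{}_mx]=[z,{}_mx]$, this gives $z=[z,{}_mx]$, and iterating the identity yields $z=[z,{}_{km}x]$ for every $k\geq 1$. Now let $U$ be an arbitrary open normal subgroup of $G$; as noted above, $G/U$ is a finite nilpotent group, say of class $c$. Because $[z,{}_{km}x]\in\gamma_{km+1}(G)$, choosing $k$ with $km\geq c$ gives $zU=[z,{}_{km}x]U\in\gamma_{km+1}(G/U)=1$, i.e. $z\in U$. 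Since $G$ is Hausdorff, the intersection of all open normal subgroups is trivial, so $z=1$. Hence ${\mathscr R}(g)=\{1\}$ for every $g\in G$, which says precisely that $G$ is an Engel group.

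To finish, $G$ is profinite and Engel, so $G$ is locally nilpotent by the Wilson--Zelmanov theorem \cite{wi-ze}. I do not expect a real obstacle here: the two points that need a little care are the elementary fact that every finite continuous quotient of a pronilpotent group is nilpotent, and the routine check that the iterated Engel commutator $[z,{}_{km}x]$ lies in $\gamma_{km+1}(G)$ and therefore dies in any finite nilpotent quotient of class less than $km$. If one wished to avoid even the quotient argument, one could instead pass to the Sylow pro-$p$ components $G_p$ (a pronilpotent group is their Cartesian product) and run the same collapse inside each pro-$p$ group, but the direct argument via finite nilpotent quotients is cleaner.
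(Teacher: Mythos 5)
Your proposal is correct and follows the same strategy as the paper: show that every right Engel sink collapses to $\{1\}$, so that $G$ is an Engel profinite group, and then invoke the Wilson--Zelmanov theorem. The only (harmless) difference is in how the collapse is obtained: the paper separates the finite set ${\mathscr R}(g)$ from the identity by an open normal subgroup $N$ with nilpotent quotient and notes that ${\mathscr R}(g)$ must also map onto the trivial sink of $G/N$, whereas you use the fixed-point equation $z=[z,{}_{km}x]$ from Lemma~\ref{l-r-sink} to place each sink element in $\bigcap_i\gamma_i(G)=1$.
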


\begin{proof}
For any $g\in G$, since ${\mathscr R}(g)$ is finite, there is
an open normal subgroup $N$ with nilpotent quotient $G/N$ such that ${\mathscr R}(g)\cap N=\{1\}$. On the other hand, ${\mathscr R}(g)\subset N$, since $G/N$ is nilpotent. Thus, ${\mathscr R}(g)=\{1\}$ for every $g\in G$, which means that $G$ is
an Engel profinite group. Then $G$ is locally nilpotent by the Wilson--Zelmanov theorem \cite[Theorem~5]{wi-ze}.
\end{proof}

 Recall that the pro\-nil\-po\-tent residual of a profinite group $G$ is $\gamma _{\infty}(G)=\bigcap _i\gamma _i(G)$, where $\gamma _i(G)$ are the terms of the lower central series; this is the smallest normal subgroup with pro\-nil\-po\-tent quotient. The following lemma is well known and is easy to prove. Here, element orders are understood as Steinitz numbers. The same results also hold in the special case of finite groups.

 \begin{lemma}[{\cite[Lemma~4.3]{khu-shu162}}]\label{l-res}
 {\rm (a)} The pro\-nil\-po\-tent residual $\gamma _{\infty}(G)$ of a profinite group $G$ is equal to the subgroup generated by all commutators $[x,y]$, where $x,y$ are elements of coprime orders.

 {\rm (b)} For any normal subgroup $N$ of a profinite group $G$ we have $\gamma _{\infty}(G/N)=
 \gamma _{\infty}(G)N/N$.
 \end{lemma}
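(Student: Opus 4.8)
The plan is to prove part~(a) first and read off part~(b) from it. Throughout, write $D(G)$ for the closed subgroup of a profinite group $G$ topologically generated by all commutators $[x,y]$ with $x,y$ of coprime (Steinitz) orders. Since conjugation preserves the order of an element, this generating set is conjugation-invariant, so $D(G)$ is a closed normal subgroup; and since $G$ is compact, the image of a closed subgroup under a continuous homomorphism is again closed, so ``the closed subgroup generated by'' a set commutes with taking continuous epimorphic images --- a fact I shall use freely.

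I would isolate two ingredients. The first is the classical \emph{finite-group lemma}: a finite group $H$ in which every two elements of coprime order commute is nilpotent --- indeed each Sylow subgroup of $H$ is then centralized by every element of coprime order, hence (together with those elements it generates $H$) is normal, so $H$ is the direct product of its Sylow subgroups; conversely, in a nilpotent (or pronilpotent) group two elements of coprime order commute, since such an element lies in the product of the Sylow subgroups for the primes dividing its order. Combining the two directions gives $\gamma_{\infty}(H)=D(H)$ for finite $H$: modulo $D(H)$ all coprime pairs commute, so $H/D(H)$ is nilpotent and $\gamma_{\infty}(H)\le D(H)$, while every coprime commutator dies in the nilpotent quotient $H/\gamma_{\infty}(H)$, so $D(H)\le\gamma_{\infty}(H)$. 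The second ingredient is a \emph{lifting fact}: if $M$ is a closed normal subgroup of $G$ and $\bar x\in G/M$ has order a $\pi$-number for a set of primes $\pi$, then $\bar x$ is the image of a $\pi$-element of $G$; one takes any preimage $g$, passes to the procyclic subgroup $\overline{\langle g\rangle}\cong\prod_p C_p$, and keeps only the components $g_p$ with $p\in\pi$ --- the resulting $\pi$-element still maps onto $\bar x$, because the $p$-components of $\bar x$ vanish for $p\notin\pi$.

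Given these, part~(a) is short. The inclusion $D(G)\le\gamma_{\infty}(G)$ holds because $G/\gamma_{\infty}(G)$ is pronilpotent, so any two coprime-order elements of $G$ have commuting images there, whence every coprime commutator of $G$ lies in $\gamma_{\infty}(G)$. For the reverse inclusion I would use the characterization of $\gamma_{\infty}(G)$ as the smallest closed normal subgroup with pronilpotent quotient, and check that $G/D(G)$ is pronilpotent, i.e.\ that every finite quotient $G/W$ with $W\ge D(G)$ is nilpotent: given coprime-order elements of $G/W$, lift them by the lifting fact to elements of $G$ supported on the corresponding disjoint prime sets, hence of coprime orders; their commutator lies in $D(G)\le W$, so the images commute in $G/W$, and the finite-group lemma makes $G/W$ nilpotent.

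For part~(b), let $q\colon G\to G/N$ be the quotient map. By part~(a) applied to $G$ itself, $\gamma_{\infty}(G)=D(G)$, so $q(\gamma_{\infty}(G))=\gamma_{\infty}(G)N/N$ is the closed subgroup generated by the elements $q([x,y])=[q(x),q(y)]$ with $x,y\in G$ of coprime orders; each of these is a coprime commutator in $G/N$ (orders can only drop under $q$), so $\gamma_{\infty}(G)N/N\le\gamma_{\infty}(G/N)$ by part~(a) applied to $G/N$. Conversely, every generator $[\bar x,\bar y]$ of $\gamma_{\infty}(G/N)=D(G/N)$ lifts, via the lifting fact, to a coprime commutator $[x,y]\in\gamma_{\infty}(G)$ with $q([x,y])=[\bar x,\bar y]$, giving the reverse inclusion. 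I expect the lifting fact to be the only step requiring genuine care, as it is where one uses the structure of procyclic groups to adjust an arbitrary preimage to one with prescribed prime support through a possibly infinite closed normal subgroup; everything else is the finite-group lemma and routine bookkeeping with Steinitz orders, closures, and inverse limits in the compact setting.
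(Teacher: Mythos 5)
The paper does not prove this lemma at all --- it is imported from \cite[Lemma~4.3]{khu-shu162} with the remark that it is well known --- so there is no in-paper argument to compare against; your proof is correct and is the standard one (reduce to the finite statement that a group in which coprime-order elements commute has all Sylow subgroups normal, then pass to profinite groups via the characterization of $\gamma_{\infty}(G)$ as the smallest closed normal subgroup with pronilpotent quotient together with the lifting of $\pi$-elements through closed normal subgroups). The only quibble is notational: in the lifting fact you write $\overline{\langle g\rangle}\cong\prod_p C_p$, where the factors should be understood as the Sylow pro-$p$ subgroups of the procyclic group (each isomorphic to $\mathbb{Z}_p$ or to a finite cyclic $p$-group), not groups of order $p$; your actual use of the decomposition into $p$-components is the intended and correct one.
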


The following generalization of Hall's criterion for nilpotency \cite{hall58}, which will be used later, already appeared in \cite{khu-shu162}.
The derived subgroup of a group $B$ is denote by $B'$.

\begin{lemma}[{\cite[Proposition~4.4(b)]{khu-shu162}}]\label{p-hall}
 Suppose that $B$ is a normal subgroup of a profinite group $A$ such that $B$ is pro\-nil\-po\-tent and $\gamma _{\infty}(A/B')$ is finite.
Then the subgroup $D=C_A(\gamma _{\infty }(A/B'))=\{a\in A\mid [\gamma _{\infty }(A), a]\leq B'\}$ is open and pro\-nil\-po\-tent.
\end{lemma}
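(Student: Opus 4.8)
The plan is to establish the two assertions about $D$ separately. Openness is formal. Writing bars for images modulo $B'$, we have $\bar D=D/B'=C_{\bar A}(\gamma_\infty(\bar A))$, and since $\gamma_\infty(\bar A)$ is a finite characteristic subgroup of $\bar A$, conjugation defines a continuous homomorphism $\bar A\to\operatorname{Aut}(\gamma_\infty(\bar A))$ (continuous because $\gamma_\infty(\bar A)$ carries the discrete topology); thus $\bar D$ is the kernel of a continuous map to a finite group, hence open and normal in $\bar A$, so $D$ is open and normal in $A$. I also record for later use that $B'\le D$, since $B'\trianglelefteq A$ forces $[\gamma_\infty(A),B']\le B'$, and hence $\bar B'\le\bar D$ in any quotient.

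For pronilpotency it suffices to prove that every finite continuous quotient of $D$ is nilpotent. Indeed, if $\gamma_\infty(D)\ne 1$, choose an open subgroup of $D$ not containing $\gamma_\infty(D)$ and replace it by its core $U$ in $A$, which is open normal in $A$ (as $D$ has finite index) and is contained in $D$; then $\gamma_\infty(D/U)=\gamma_\infty(D)U/U\ne 1$ by Lemma~\ref{l-res}(b), so $D/U$ is not nilpotent. So fix such a $U$ and put $\bar A=A/U$, $\bar B=BU/U$, $\bar D=D/U$; here $\bar B$ is a finite nilpotent normal subgroup of $\bar A$ (a finite quotient of the pronilpotent $B$), $\gamma_\infty(\bar A)=\gamma_\infty(A)U/U$ by Lemma~\ref{l-res}(b), and $[\gamma_\infty(\bar A),\bar D]\le\bar B'$ by the definition of $D$. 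It remains to show that $\bar D$ is nilpotent.

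This finite assertion is the heart of the matter, and I would prove it through Schmidt's description of minimal non-nilpotent groups. First, $\gamma_\infty(\bar D)\le\gamma_\infty(\bar A)$ by monotonicity of the lower central series in subgroups, so $[\gamma_\infty(\bar D),\bar D]\le\bar B'$; hence $\gamma_\infty(\bar D/\bar B')$ is central in $\bar D/\bar B'$ with nilpotent quotient, so $\bar D/\bar B'$ is nilpotent and $\gamma_\infty(\bar D)\le\bar B'$. Suppose $\bar D$ is not nilpotent and let $S\le\bar D$ be a minimal non-nilpotent subgroup. By Schmidt's theorem \cite[Satz~III.5.1]{hup}, $S=P\langle y\rangle$ with $P$ a normal $p$-subgroup and $y$ a $q$-element, $q\ne p$, and minimality forces $P=[P,y]=\gamma_\infty(S)$; thus $P\le\gamma_\infty(\bar D)\le\bar B'$, and as $P$ is a $p$-group, $P\le\bar B_p'$, where $\bar B_p$ is the Sylow $p$-subgroup of $\bar B$, normal in $\bar A$. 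Now $y$ acts coprimely on $\bar B_p$, the subgroup $[\bar B_p,y]$ is generated by commutators of elements of coprime orders, hence $[\bar B_p,y]\le\gamma_\infty(\bar A)$ by Lemma~\ref{l-res}(a), and since $y\in\bar D$ we get $[[\bar B_p,y],y]\le[\gamma_\infty(\bar A),y]\le\bar B'$; by the standard coprime identity $[\bar B_p,y]=[[\bar B_p,y],y]\le\bar B'$, so $[\bar B_p,y]\le\bar B_p'\le\Phi(\bar B_p)$. Thus $y$ centralizes $\bar B_p/\Phi(\bar B_p)$, and a coprime automorphism trivial modulo the Frattini subgroup is trivial (apply $C_{G/\Phi(G)}(y)=C_G(y)\Phi(G)/\Phi(G)$ together with the fact that $\Phi(G)$ consists of non-generators), so $[\bar B_p,y]=1$, whence $P=[P,y]=1$, a contradiction. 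Hence $\bar D$ is nilpotent, which completes the proof.

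The main obstacle is precisely this last step, where both hypotheses must be used simultaneously. Through $\gamma_\infty$, the $p$-section $P$ of a would-be Schmidt subgroup is driven into $\bar B'=\prod_\ell\bar B_\ell'$, hence into $\bar B_p'\le\Phi(\bar B_p)$ --- and here the nilpotency of $\bar B$ is essential, since it is what forces $\bar B_p'$ inside the Frattini subgroup; at the same time the same centralizing condition on $D$, re-read through the coprime action of $y$ on the normal Sylow subgroup $\bar B_p$ of $\bar B$, forces $y$ to act trivially on $\bar B_p$ and so annihilates $P$. Identifying $\bar B_p$ --- rather than $\gamma_\infty(\bar A)$ or $\bar B$ itself --- as the right arena for the coprime-action argument is the crux; everything else is routine bookkeeping with the identities of Lemma~\ref{l-res} and the coprime-action facts recalled in \S\,\ref{s-f}.
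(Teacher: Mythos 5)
This lemma is not proved in the present paper: it is imported verbatim from \cite[Proposition~4.4(b)]{khu-shu162}, where it is obtained as a by-product of a profinite version of P.~Hall's nilpotency criterion (part (a) of that proposition). Your argument is correct and self-contained, but it takes a different route. I checked the key points: the openness of $D$ as the kernel of the continuous conjugation action on the finite discrete group $\gamma_{\infty}(A/B')$; the reduction of pronilpotency to nilpotency of the quotients $D/U$ by open normal subgroups $U$ of $A$ contained in $D$, via Lemma~\ref{l-res}(b) and the core argument (legitimate since $D$ is already known to be open); the passage of all hypotheses to $\bar A=A/U$, including $B'U/U\leq(\bar B)'$ and $[\gamma_{\infty}(\bar A),\bar D]\leq\bar B'$; the deduction $\gamma_{\infty}(\bar D)\leq\bar B'$ from centrality of $\gamma_{\infty}(\bar D)$ modulo $\bar B'$; and the Schmidt-subgroup endgame, where $P=\gamma_{\infty}(S)=[P,y]$ lands in $\bar B_p'\leq\Phi(\bar B_p)$, while the coprime identities $[\bar B_p,y]=[\bar B_p,y,y]$ and Lemma~\ref{l-res}(a) force $[\bar B_p,y]\leq\bar B_p'$ and hence $[\bar B_p,y]=1$, killing $P$. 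All steps use only facts recalled in the paper (coprime action, Schmidt's theorem, Lemma~\ref{l-res}). What your approach buys is a direct, purely finite argument that does not presuppose Hall's criterion; what the cited approach buys is brevity, since once one has the profinite Hall criterion, (b) follows by observing that $D/B'$ is a central-by-pronilpotent extension and then applying (a). Your identification of the normal Sylow subgroup $\bar B_p$ of $\bar B$ as the arena for the coprime action is indeed the step where the nilpotency of $B$ enters essentially, and it is handled correctly.
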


Recall that in Theorem~\ref{t-eprof} we need to show that there is a finite normal subgroup with locally nilpotent quotient. The first step is to prove the existence of an open locally nilpotent subgroup.

\begin{proposition}\label{p-pf1}
If $G$ is a profinite group all elements of which are almost right Engel, then $G$ has an open
normal pro\-nil\-po\-tent subgroup.
\end{proposition}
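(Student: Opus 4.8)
The plan is to reduce, via the structure theory of profinite groups, to a situation where Theorem~\ref{t-finite} can be applied uniformly, and then invoke the Wilson--Zelmanov theorem. First I would recall that a profinite group $G$ has an open normal subgroup that is a product of a pro-($\pi$) group and finitely many finite simple factors, but the more useful starting point here is that $G$ has a countable descending chain of open normal subgroups with trivial intersection; passing to finite quotients $G/N$, every element of $G/N$ is still almost right Engel, but \emph{without} a uniform bound on $|{\mathscr R}(g)|$ — so Theorem~\ref{t-finite} does not apply directly to the $G/N$. The key device to obtain uniformity is a Baire category / compactness argument: since ${\mathscr R}(g)$ is finite for each $g$, write $G=\bigcup_{m\ge 1} X_m$ where $X_m=\{g\in G\mid |{\mathscr R}(g)|\le m\}$. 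One shows each $X_m$ is closed (using the characterization in Lemma~\ref{l-r-sink}: $g\in X_m$ iff for every finite set of $m+1$ elements of $G$ there is no way to realize $m+1$ distinct sink-values, a closed condition on $g$ in the profinite topology). By the Baire category theorem some $X_m$ has nonempty interior, hence contains a coset $gU$ of an open normal subgroup $U$.

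Next I would leverage this to get a \emph{global} bound on $G$. The coset condition gives that $U$, or a slightly smaller open normal subgroup, has all of its elements with right Engel sinks of bounded cardinality (after translating: for $u\in U$, compare ${\mathscr R}(gu)$ and ${\mathscr R}(g)$ — one needs a lemma that multiplying $g$ by an element centralizing enough of the structure controls the sink, or more simply that one can choose the open subgroup $V\le U$ so that $V$ itself, as a profinite group, has $|{\mathscr R}_V(v)|$ uniformly bounded for all $v\in V$). Then applying Theorem~\ref{t-finite} to every finite quotient $V/M$ of $V$ gives a uniform $m$-bounded order for $\gamma_{\infty}(V/M)$, hence $|\gamma_{\infty}(V)|$ is finite and $m$-bounded. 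So $V/\gamma_{\infty}(V)$ is pronilpotent, and by Lemma~\ref{l-p-n} applied to this pronilpotent quotient (all of whose elements are still almost right Engel), $V/\gamma_{\infty}(V)$ is locally nilpotent. Combining: $V$ has a finite normal subgroup with locally nilpotent quotient.

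The final step is to upgrade ``$V$ has an open locally nilpotent (indeed pronilpotent) subgroup'' to the same for $G$, and to ensure the subgroup is \emph{normal} in $G$. Since $V$ is open in $G$ and $|G:V|$ is finite, $W=\bigcap_{x\in G} V^x$ is open normal in $G$; every element of $W$ is almost right Engel, and $W$ inherits the finite-by-locally-nilpotent structure from $V$ (a closed subgroup of a group with finite normal $\gamma_\infty$ has finite $\gamma_\infty$). Now $W/\gamma_\infty(W)$ is pronilpotent and locally nilpotent; I would then argue that the \emph{normalizer} behavior forces a genuinely pronilpotent open normal subgroup of $G$: take $D$ as in Lemma~\ref{p-hall} with $B=W$ (a pronilpotent normal subgroup, after noting $W$ itself may not be pronilpotent but $\gamma_\infty(W)$ is finite, so apply the lemma with $A=G$, $B=\gamma_\infty(W)$... ) — more precisely, since $\gamma_\infty(W)$ is finite, $C_G(\gamma_\infty(W))$ is open in $G$, and replacing $G$ by this centralizer we may assume $\gamma_\infty(W)$ is central, whence $W$ is a central extension of a locally nilpotent (pronilpotent) group by a finite group, so $W$ is itself pronilpotent; its core $\bigcap_x W^x$ is then an open normal pronilpotent subgroup of $G$.

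The main obstacle I anticipate is the uniformity step: extracting from the Baire-category coset $gU$ a genuine open subgroup on which $|{\mathscr R}(\cdot)|$ is uniformly bounded. The naive translation ${\mathscr R}(gu)$ versus ${\mathscr R}(g)$ need not behave well because $g$ itself is fixed while $u$ varies, so one likely needs a separate combinatorial lemma (probably proved in the paper's Section~\ref{s-1}-style arguments, or a variant of Lemma~\ref{l-c-s}) bounding $|{\mathscr R}_V(v)|$ for $v$ in a suitable open subgroup $V\le U$ in terms of $m$ and $|G:U|$. Everything after that is an assembly of Theorem~\ref{t-finite}, Lemma~\ref{l-p-n}, Lemma~\ref{p-hall}, and routine core/centralizer manipulations.
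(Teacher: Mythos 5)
Your proposal has two genuine gaps, both at the heart of the argument. The first is the Baire category step: you write $G=\bigcup_m X_m$ with $X_m=\{g\in G\mid |{\mathscr R}(g)|\le m\}$ and assert each $X_m$ is closed. The justification does not hold up: by Lemma~\ref{l-r-sink} the complement of $X_m$ consists of those $g$ for which some data $x_1,\dots,x_{m+1}$, $n_i$, $m_i$ realize $m+1$ \emph{pairwise distinct} recurrent values $[g,{}_{n_i}x_i]=[g,{}_{n_i+m_i}x_i]$; for fixed data this is a closed set (the recurrences) intersected with an open set (the distinctness), and a union of such locally closed sets need not be open. The standard repair of using sets of the form $\{g\mid |\{[g,{}_kx]\colon k\ge n\}|\le m\ \text{for all }x\in G\}$, which genuinely are closed, fails here for a different reason: the delay $r(x,g)$ in the definition of an almost right Engel element depends on $x$, so a given $g$ need not lie in any such set for a single uniform $n$. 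The second gap is the one you flag yourself: transferring the bound from the coset $gU$ to an honest open subgroup $V$ with $|{\mathscr R}_V(v)|$ uniformly bounded. No lemma of that kind appears in the paper, and nothing in the sink calculus of Section~\ref{s-1} controls the sink of a product (or of $gu_1(gu_2)^{-1}$) in terms of the sinks of the factors; without it, the plan of feeding Theorem~\ref{t-finite} into the finite quotients of $V$ cannot start.

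The paper's proof avoids any uniformity or category argument. For each $g$ it picks an open normal $N_g$ with ${\mathscr R}(g^{-1})\cap N_g=\{1\}$, so that $g^{-1}$ is right Engel in $N_g\langle g\rangle$, hence $g$ is left Engel there by Heineken's Lemma~\ref{l-hei}; Baer's theorem in finite quotients then makes $[N_g,g]$ pro\-nil\-po\-tent, and all these subgroups land in the pro\-nil\-po\-tent radical $K$. Consequently $G/K$ is an $FC$-group, so by Shalev's theorem it has finite derived subgroup, which yields an open $H$ with $H'\le K$; in the metabelian section $H/K'$ right Engel sinks become left Engel sinks (Lemma~\ref{l-metab}), so the main theorem of \cite{khu-shu162} applies, and Lemma~\ref{p-hall} produces the open normal pro\-nil\-po\-tent subgroup. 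If you want to salvage your approach, the missing ingredient is precisely a device replacing these reductions; as written, the proposal does not constitute a proof.
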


Of course, the subgroup in question will also be locally nilpotent by Lemma~\ref{l-p-n}; the result can also be stated as the openness of the largest normal pro\-nil\-po\-tent subgroup.

\begin{proof}
For every $g\in G$ we choose an open normal subgroup $N_g$ such that ${\mathscr R}(g^{-1})\cap N_g=\{1\}$. Then $g^{-1}$ is a right Engel element in $N_g\langle g\rangle$, and therefore, $g$ is a left Engel element in $N_g\langle g\rangle$ by Lemma~\ref{l-hei}. By Baer's theorem \cite[Satz~III.6.15]{hup}, in every finite quotient of $N_g\langle g\rangle$ the image of $g$ belongs to the Fitting subgroup. As a result, the subgroup $[N_g, g]$ is pro\-nil\-po\-tent.

Let $\tilde N_g$ be the normal closure of $[N_g, g]$ in $G$. Since $[N_g, g]$ is normal in the subgroup $N_g$ of finite index, $[N_g, g]$ has only finitely many conjugates, so $\tilde N_g$ is a product of finitely many normal subgroups of $N_g$, each of which is pro\-nil\-po\-tent.
Hence, so is $\tilde N_g$. Therefore all the subgroups $\tilde N_g$ are contained in the largest normal pro\-nil\-po\-tent subgroup $K$.

The quotient $G/K$ is an $FC$-group (that is, every conjugacy class is finite), since every element $\bar g\in G/K$ is centralized by the image of $N_g$, which has finite index in $G$. A~profinite $FC$-group has finite derived subgroup \cite[Lemma~2.6]{sha}. Hence we can choose an open subgroup of $G/K$ that has trivial intersection with the finite derived subgroup of $G/K$ and therefore is abelian; let $H$ be its full inverse image in~$G$. Thus, $H$ is an open subgroup such that the derived subgroup $H'$ is contained in~$K$.

In the metabelian quotient $M=H/K'$, all elements have finite right Engel sinks, as this property is inherited from the group $G$. By Lemma~\ref{l-metab} it follows that all elements of $M$ also have
finite left Engel sinks. Therefore we can apply Theorem~4.1 of \cite{khu-shu162}, by which $M$ is finite-by-(locally nilpotent), that is, $\gamma _{\infty}(M)$ is finite.

Let $W$ be the full inverse image of $M$, which is an open subgroup of $G$ containing $K$, and let $ \Gamma$ be the full inverse image of $\gamma _{\infty}( M)$.
Now let $F=C_W(\gamma _{\infty}( M))=\{w\in W\mid [\Gamma ,w]\leq K'\}$. By Lemma~\ref{p-hall}(b), this is an open normal pro\-nil\-po\-tent subgroup, which completes the proof of Proposition~\ref{p-pf1}.
\end{proof}

\begin{proof}[Proof of Theorem~\ref{t-eprof}] Recall that $G$ is a profinite group all elements of which are almost right Engel, and we need to show that $\gamma _{\infty}(G)$ is finite. Henceforth we denote by $F(L)$ the largest normal pro\-nil\-po\-tent subgroup of a profinite group $L$. By Proposition~\ref{p-pf1} we already know that $G$ has an open normal pro\-nil\-po\-tent
 subgroup, so that $F(G)$ is also open.

Since $G/F(G)$ is finite, we can use induction on $|G/F(G)|$.
 The basis of this induction includes the trivial case $G/F(G)=1$ when $\gamma _{\infty}(G)=1$. But the bulk of the proof deals with the case where $G/F(G)$ is a finite simple group.

Thus, we assume that $G/F(G)$ is a finite simple group (abelian or non-abelian).
Let $p$ be a prime divisor of $|G/F(G)|$, and $g\in G\setminus F(G)$ an element of order $p^n$, where $n$ is either a positive integer or $\infty$ (so $p^n$ is a Steinitz number). For any prime $q\ne p$, the element $g$ acts by conjugation on the Sylow $q$-subgroup $Q$ of $F(G)$ as an automorphism of order dividing $p^n$. The subgroup $[Q, g]$ is a normal subgroup of $Q$ and therefore also a normal subgroup of $F(G)$. The image of $[Q, g]$ in any finite quotient has order bounded in terms of $|{\mathscr R}(g)|$ by Lemma~\ref{l0}. It follows that $[Q, g]$ is finite of order bounded in terms of $|{\mathscr R}(g)|$.

Since $[Q, g]$ is normal in $F(G)$, its normal closure $\langle [Q, g]^G\rangle $ in $G$ is a product of finitely many conjugates and is therefore also finite. Let $R$ be the product of these closures $\langle [Q, g]^G\rangle $ over all Sylow $q$-subgroups $Q$ of $F(G)$ for $q\ne p$. Since $[Q, g]$ is finite of order bounded in terms of $|{\mathscr R}(g)|$ as shown above, there are only finitely many primes $q$ such that $[Q,g]\ne 1$ for the Sylow $q$-subgroup $Q$ of $F(G)$. Therefore $R$ is finite, and it is sufficient to prove that $\gamma _{\infty }(G/R)$ is finite. Thus, we can assume that $R=1$. Note that then $[Q, g^a]=1$ for any conjugate $g^a$ of $g$ and any Sylow $q$-subgroup $Q$ of $F(G)$ for $q\ne p$.

Choose a transversal $\{t_1,\dots, t_k\}$ of $G$ modulo $F(G)$.
 Let $G_1=\langle g^{t_1}, \dots ,g^{t_k}\rangle$. Clearly, $G_1F(G)/F(G)$ is generated by the conjugacy class of the image of $g$. Since $G/F(G)$ is simple, we have $G_1F(G)=G$. By our assumption, the Cartesian product $T$ of all Sylow $q$-subgroups of $F(G)$ for $q\ne p$ is centralized by all elements $g^{t_i}$. Hence, $[G_1, T]=1$. Let $P$ be the Sylow $p$-subgroup of $F(G)$ (possibly, trivial). Then also $[PG_1, T]=1$, and therefore
 $$\gamma _{\infty }(G)=\gamma _{\infty }(G_1F(G))= \gamma _{\infty }(PG_1).$$
 The image of $\gamma _{\infty }(PG_1)\cap T$ in $G/P$ is contained both in the centre and in the derived subgroup of $PG_1/P$ and therefore is isomorphic to a subgroup of the Schur multiplier of the finite group $G/F(G)$. Since the Schur multiplier of a finite group is finite \cite[Hauptsatz~V.23.5]{hup}, we obtain that $\gamma _{\infty }(G)\cap T=\gamma _{\infty }( PG_1)\cap T$ is finite. Therefore we can assume that $T=1$, in other words, that $F(G)$ is a pro-$p$ group.

 If $|G/F(G)|=p$, then $G$ is a pro-$p$ group, so it is pro\-nil\-po\-tent, which means that $\gamma _{\infty }( G)=1$ and the proof is complete. If $G/F(G)$ is a non-abelian simple group, then we choose another prime $r\ne p$ dividing $|G/F(G)|$ and repeat the same arguments as above with $r$ in place of $p$. As a result, we reduce the proof to the case $F(G)=1$, where the result is obvious.

We now finish the proof of Theorem~\ref{t-eprof} by induction on $|G/F(G)|$. The basis of this induction where $G/F(G)$ is a simple group was proved above. Now suppose that $G/F(G)$ has a nontrivial proper normal subgroup with full inverse image $N$, so that $F(G)<N\lhd G$. Since $F(N)=F(G)$, by induction applied to $N$ the group $\gamma _{\infty }(N)$ is finite. Since $N/\gamma _{\infty }(N)\leq F( G/\gamma _{\infty }(N))$, by induction applied to $G/\gamma _{\infty }(N)$ the group $ \gamma _{\infty }(G/\gamma _{\infty }(N) )$ is also finite. As a result, $\gamma _{\infty }(G) $ is finite, as required.
\end{proof}

\section{Compact groups}
\label{s-comp}
In this section we prove the main Theorem~\ref{t-e}
about compact almost Engel groups. We use the structure theorems for compact groups and the results of the preceding section on profinite almost Engel groups.

Recall that a group $H$ is said to be \emph{divisible} if for every $h\in H$ and every positive integer $k$ there is an element $x\in H$ such that $x^k=h$.

\begin{lemma}\label{l-div}
Suppose that $H$ is divisible group all elements of which are almost right Engel. Then for any $g,x\in H$ there is a positive integer $n(x,g)$ such that $[[g,{}_nx],g]=1$ for all $n\geq n(x,g)$.
\end{lemma}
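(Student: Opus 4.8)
The plan is to prove the sharper statement that $g$ centralises its own right Engel sink ${\mathscr R}(g)$. Granting this, the lemma is immediate: for any $x\in H$ and any $n\geq r(x,g)$ we have $[g,{}_nx]\in{\mathscr R}(g)$ by the defining property of the sink, hence $[g,{}_nx]$ commutes with $g$, that is, $[[g,{}_nx],g]=1$; so one may simply take $n(x,g)=r(x,g)$.

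The crux is that divisibility allows us to write $g$ as a large power of an element that is automatically forced to commute with $g$. Put $m=|{\mathscr R}(g)|$, a positive integer. Since $H$ is divisible, choose $y\in H$ with $y^{m!}=g$. As $y$ commutes with the power $y^{m!}=g$, we have $y\in C_H(g)$, so Lemma~\ref{l-c-s} applies with $h=y$ and gives that $y^{m!}$ centralises ${\mathscr R}(g)$. But $y^{m!}=g$, so $g$ centralises ${\mathscr R}(g)$, which is exactly the sharper statement claimed above.

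I do not expect a genuine obstacle: the argument amounts to combining root extraction in $H$ with a single application of Lemma~\ref{l-c-s} (which in turn rests on Lemma~\ref{l-r-sink} and the fact that conjugation by a centralising element permutes the finite set ${\mathscr R}(g)$). The only points that need checking are routine, namely that Lemma~\ref{l-c-s} and Lemma~\ref{l-r-sink} are statements about an arbitrary group and so apply verbatim to $H$, and that everything is consistent in the degenerate case where $g$ is already a right Engel element, when ${\mathscr R}(g)=\{1\}$, $m!=1$, $y=g$, and the assertion $[[g,{}_nx],g]=[1,g]=1$ is trivial. It is worth noting that this is essentially the only place in the compact-group argument where divisibility enters.
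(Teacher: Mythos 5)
Your proposal is correct and coincides with the paper's own proof: both extract an $m!$-th root $y$ of $g$ (with $m=|{\mathscr R}(g)|$), note that $y\in C_H(g)$, and apply Lemma~\ref{l-c-s} to conclude that $g=y^{m!}$ centralises ${\mathscr R}(g)$, whence $[[g,{}_nx],g]=1$ for all $n\geq r(x,g)$. No further comment is needed.
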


\begin{proof}
Let $|{\mathscr R}(g)|=m$. Let $h\in H$ be an element such that $h^{m!}=g$. Since $h$ centralizes $g$, by Lemma~\ref{l-c-s} we obtain that $g=h^{m!}$ centralizes ${\mathscr R}(g)$. By the definition of ${\mathscr R}(g)$, there is a positive integer $n(x,g)$ such that $[g,{}_nx]\in {\mathscr R}(g)$ for all $n\geq n(x,g)$. By the above, $[[g,{}_nx],g]=1$ for all $n\geq n(x,g)$.
\end{proof}

\begin{lemma}\label{l-sandw}
If in a finite group $H$ for any $g,x\in H$ there is a positive integer $n(x,g)$ such that $[[g,{}_nx],g]=1$ for all $n\geq n(x,g)$, then $H$ is nilpotent.
\end{lemma}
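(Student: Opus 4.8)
The plan is to argue by contradiction via a minimal counterexample. If the lemma fails, let $H$ be a counterexample of smallest order. The hypothesis passes to subgroups (commutators are computed the same way in a subgroup) and to quotients (if $[[g,{}_nx],g]=1$ for all $n\ge n(x,g)$, the same holds for the images modulo any normal subgroup), so every proper subgroup and every proper quotient of $H$ is nilpotent while $H$ itself is not. Hence $H$ is a minimal non-nilpotent finite group, i.e.\ a Schmidt group: $H=P\langle x_0\rangle$, where $P=O_p(H)\ne 1$ is the normal Sylow $p$-subgroup, $\langle x_0\rangle$ is a cyclic Sylow $q$-subgroup with $q\ne p$, and $V:=P/\Phi(P)$ is an irreducible $\langle x_0\rangle$-module (see, e.g., \cite{rob} or \cite{hup}).

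Next I would observe that $[P,x_0]=P$. The subgroup $[P,x_0]$ is $\langle x_0\rangle$-invariant, hence equals $[P,\langle x_0\rangle]\lhd H$; if it were proper in $P$, then $[P,x_0]\langle x_0\rangle$ would be a proper, and therefore nilpotent, subgroup of $H$, so $[[P,x_0],x_0]=1$, contradicting the coprime-action identity $[P,x_0]=[[P,x_0],x_0]$. Consequently $\langle x_0\rangle$ has no nonzero fixed points on $V$ (by coprimeness $V=[V,x_0]\oplus C_V(x_0)$, while $[V,x_0]=V$), so conjugation by $x_0$ acts on $V$ as a linear map $\sigma$ with $\sigma-1$ invertible.

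The heart of the argument is to show that $P$ is abelian. Pick $g\in P\setminus\Phi(P)$ and consider the sequence $g_n:=[g,{}_nx_0]\in P$. Its image in $V$ is $(\sigma-1)^n\bar g\ne 0$, so no $g_n$ lies in $\Phi(P)$; and since $P$ is finite the sequence is eventually periodic, cycling through elements $c_1,\dots,c_k$ with $c_{i+1}=[c_i,x_0]$ (indices mod $k$), none of them in $\Phi(P)$. Applying the hypothesis to each pair $(c_i,x_0)$ and using that $[c_i,{}_nx_0]=c_{i+n}$ runs through the whole cycle as $n\to\infty$, we get $[c_j,c_i]=1$ for all $i,j$; thus $A:=\langle c_1,\dots,c_k\rangle$ is abelian. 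It is $\langle x_0\rangle$-invariant (since $c_i^{x_0}=c_ic_{i+1}\in A$) and satisfies $[A,x_0]=A$ (the elements $c_{i+1}=[c_i,x_0]$ generate $A$), so its image in $V$ is a nonzero $\langle x_0\rangle$-submodule, hence all of $V$ by irreducibility; therefore $A\Phi(P)=P$ and so $A=P$. Thus $P$ is abelian, and then $C_P(x_0)=1$ because $P=[P,x_0]\times C_P(x_0)$ by coprimeness and $[P,x_0]=P$.

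Finally I would obtain a contradiction by a short commutator computation with $g=x_0$ and $x=vx_0$ for a fixed $v\in P\setminus\{1\}$. Writing $P$ additively and letting $\tau$ be the automorphism of $P$ induced by conjugation by $x_0$ (invertible, with $\tau-1$ invertible), conjugation by $vx_0$ agrees with $\tau$ on $P$, so $[w,x_0]=[w,vx_0]=(\tau-1)w$ for every $w\in P$, while standard commutator identities give $[x_0,vx_0]=[x_0,v]^{x_0}=-\tau(\tau-1)v$. Iterating yields $[x_0,{}_n(vx_0)]=-\tau(\tau-1)^nv$, whence $[[x_0,{}_n(vx_0)],x_0]=-\tau(\tau-1)^{n+1}v$, which is nontrivial for \emph{every} $n\ge 1$; so no number $n(vx_0,x_0)$ exists, contradicting the hypothesis. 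I expect the one genuinely substantive step to be the middle one — extracting ``$P$ is abelian'' from the behaviour of the Engel cycles — since the reduction to a Schmidt group is standard and the concluding computation is routine once $P$ is known to be an abelian group on which $x_0$ acts fixed-point-freely.
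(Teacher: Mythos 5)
Your argument is correct, but it takes a genuinely different (and longer) road than the paper's. The paper does not pass to a minimal counterexample: it invokes Frobenius's normal $p$-complement criterion to produce a $p$-subgroup $P$ and a $p'$-element $g$ normalizing but not centralizing $P$, works at once in the abelian quotient $V=P/P'$, where $1\ne [V,g]=\{[v,{}_ng]\mid v\in [V,g]\}$ for every $n$, and applies the hypothesis to the single pair $(g,gx)$ with $1\ne x\in [V,g]$, computing $1=[[g,{}_n(gx)],g]=[x,{}_{n+1}g]^{-1}$ --- essentially the same trick as your concluding computation with the pair $(x_0,vx_0)$. You instead reduce to a Schmidt group, prove $[P,x_0]=P$, and then spend the bulk of the effort showing that $P$ is abelian via the eventual periodicity of the orbits $g\mapsto [g,x_0]$; all of these steps check out, and the cycle argument yielding $[c_i,c_j]=1$ is a nice self-contained observation. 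Note, however, that the abelian step is dispensable: since your final expression $-\tau(\tau-1)^{n+1}v$ only needs to be nontrivial, you could run that computation in $P/\Phi(P)$ (or $P/P'$), where $\tau-1$ is already known to be invertible once $[P,x_0]=P$ is established, and conclude immediately. What your route buys is independence from the normal $p$-complement theorem (at the price of Schmidt's classification of minimal non-nilpotent groups and the extra structural work); what the paper's route buys is brevity.
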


\begin{proof}
If $H$ is not nilpotent, then by a theorem of Frobenius \cite[10.3.2]{rob} there is a prime $p$ and a $p$-subgroup $P$ such that $N_H(P)/C_H(P)$ is not a $p$-group. Then there is a $p'$-element $g$ normalizing but not centralizing $P$. For the abelian group $V=P/P'$ with the induced action of $g$ we obtain $1\ne [V,g]=\{[x,{}_ng]\mid x\in [V,g]\}$ for any $n\in \N$, so that $[x,{}_ng]\ne 1$ if $1\ne x\in [V,g]$. On the other hand, for $n\geq n(gx,g)$ we have
 $$1=[[g,{}_ngx],g]= [[[x,g]^{-1},{}_{n-1}gx],g] =[x,{}_{n+1}g]^{-1},$$
 a contradiction.
\end{proof}

We are ready to prove Theorem~\ref{t-e}.

\begin{proof}[Proof of Theorem~\ref{t-e}] Let $G$ be a compact group all elements of which are almost right Engel; we need to show that there is a finite subgroup $N$ such that $G/N$ is locally nilpotent.
 By the well-known structure theorems (see, for example, \cite[Theorems~9.24 and 9.35]{h-m}), the connected component of the identity $G_0$ in $G$ is a divisible group such that $G_0/Z(G_0)$ is a Cartesian product of simple compact Lie groups, while the quotient $G/G_0$ is a profinite group. Note that simple compact Lie groups are linear groups.

 \begin{lemma}\label{l-abelian}
 The connected component of the identity $G_0$ in $G$ is an abelian group.
 \end{lemma}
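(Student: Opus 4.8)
The plan is to exploit the structure of $G_0$: by the cited structure theorems, $G_0$ is divisible and $G_0/Z(G_0)$ is a Cartesian product of simple compact Lie groups, each of which is a linear group. Suppose for contradiction that $G_0$ is non-abelian; then $G_0/Z(G_0)$ is non-trivial, so it has a simple compact Lie group $S$ as a direct factor, and $S$ being non-abelian and linear contains a finite non-nilpotent subgroup (for instance a copy of $A_4$ inside $\mathrm{SO}(3)$, or one can pull back a suitable non-nilpotent finite subgroup from the linear representation). The strategy is to find inside $G_0$ a finite subgroup whose image in $G_0/Z(G_0)$ lands in one such factor $S$ and is non-nilpotent, and then derive a contradiction with Lemmas~\ref{l-div} and \ref{l-sandw}.

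First I would observe that $G_0$, being divisible and having all elements almost right Engel, satisfies the conclusion of Lemma~\ref{l-div}: for any $g,x\in G_0$ there is $n(x,g)$ with $[[g,{}_nx],g]=1$ for all $n\ge n(x,g)$. This ``sandwich'' condition passes to every subgroup of $G_0$, in particular to every \emph{finite} subgroup $H\le G_0$. But by Lemma~\ref{l-sandw}, any finite group satisfying this condition is nilpotent. Hence every finite subgroup of $G_0$ is nilpotent, i.e. $G_0$ is locally nilpotent (as a finitely generated subgroup of a compact group has finitely generated, hence finite-by-abelian closure — more simply, it suffices to know every finite subgroup is nilpotent to rule out the non-abelian Lie factor).

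The contradiction then comes from producing a \emph{non-nilpotent finite} subgroup of $G_0$. I would argue: if $G_0$ is non-abelian then $G_0/Z(G_0)$ has a non-trivial simple compact Lie group direct factor $S$; projecting, it suffices to produce a non-nilpotent finite subgroup of $S$ and lift it. Since $S$ is a non-abelian compact connected Lie group it contains a subgroup isomorphic to $\mathrm{SO}(3)$ or $\mathrm{SU}(2)$ (via a root $\mathfrak{sl}_2$), and $\mathrm{SO}(3)$ contains the non-nilpotent finite group $A_4$ (alternatively $S_3$). A finite subgroup $\bar H\le S\le G_0/Z(G_0)$ need not lift to an isomorphic finite subgroup of $G_0$ because of the central extension by $Z(G_0)$, but its full preimage $\tilde H$ in $G_0$ is a compact group with $\tilde H/(\tilde H\cap Z(G_0))\cong \bar H$; one then picks a finite subgroup of $\tilde H$ surjecting onto $\bar H$ by using divisibility of $Z(G_0)$ to split off a finite complement, or — cleaner — one notes that $\mathrm{SU}(2)$ itself (or the relevant compact factor) already contains the non-nilpotent finite subgroup $2.A_4$ (the binary tetrahedral group) or the quaternion-times-something group, and any actual finite subgroup of $G_0$ mapping onto a non-nilpotent subgroup of $S$ is itself non-nilpotent since nilpotency is inherited by quotients. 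Either way we obtain a non-nilpotent finite subgroup $H\le G_0$, contradicting the previous paragraph, so $G_0$ must be abelian.

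The main obstacle I anticipate is the lifting issue: ensuring that the non-nilpotent finite subgroup found in the semisimple quotient $G_0/Z(G_0)$ actually comes from a genuine finite subgroup of $G_0$, rather than only a compact subgroup with infinite center. The clean way around it is to work directly inside a simple compact Lie group: every non-abelian simple compact Lie group $S$ contains an $\mathrm{SU}(2)$ or $\mathrm{SO}(3)$, and both of these contain honest non-nilpotent finite subgroups (e.g. $2.A_4\le \mathrm{SU}(2)$, $A_4\le\mathrm{SO}(3)$); moreover the full preimage in $G_0$ of such an $S$ is itself a compact connected group with the same Lie algebra, so it contains a copy of $\mathrm{SU}(2)$ and hence the binary tetrahedral group — and a finite subgroup of $G_0$ with non-nilpotent quotient is non-nilpotent. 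This yields the required contradiction and proves Lemma~\ref{l-abelian}.
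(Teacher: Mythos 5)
Your overall strategy is genuinely different from the paper's, and its first half is exactly right: Lemma~\ref{l-div} gives the ``sandwich'' identity $[[g,{}_nx],g]=1$ on $G_0$, this restricts to every finite subgroup, and Lemma~\ref{l-sandw} then forces every finite subgroup of $G_0$ to be nilpotent. Where the paper diverges is that it never tries to lift anything back into $G_0$: it works directly in a simple factor $S\leq G_0/Z(G_0)$ (the sandwich identity and the almost right Engel condition both pass to sections), takes an arbitrary \emph{finitely generated} subgroup $F\leq S$, uses Mal'cev's theorem \cite{mal} that finitely generated linear groups are residually finite together with Lemma~\ref{l-sandw} to get $F$ residually nilpotent, then uses finiteness of ${\mathscr R}_F(f)$ to conclude ${\mathscr R}_F(f)=\{1\}$, so $F$ is Engel, and finally invokes the Garashchuk--Suprunenko theorem \cite{gar-sup} on linear Engel groups to make $F$ nilpotent and $S$ locally nilpotent, a contradiction. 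That route buys a complete bypass of the central-extension problem that your proof has to confront.

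The soft spot in your argument is precisely the lifting step, and one of your two proposed fixes is wrong as stated: a central extension of a finite group by a divisible abelian group need \emph{not} split, since for instance $H^2(C_2\times C_2,\mathbb{C}^{\times})\neq 0$, so you cannot simply ``split off a finite complement'' of $Z(G_0)$. (What divisibility does give, by a cocycle adjustment, is that the extension class is induced from a \emph{finite} central subgroup, which already suffices to produce a finite subgroup of $G_0$ surjecting onto $\bar H$ --- but that is a different argument from the one you wrote.) Your ``cleaner'' alternative also needs repair: the full preimage of $S$ contains all of $Z(G_0)$ and so does not have ``the same Lie algebra'' as $S$. The correct statement, which does rescue your approach, is the Hofmann--Morris structure theorem \cite{h-m} that the closed commutator subgroup of the compact connected group $G_0$ is a quotient of a Cartesian product of simply connected simple compact Lie groups by a central subgroup; the image of one factor is then an honest simple compact Lie group inside $G_0$, containing a copy of $\mathrm{SU}(2)$ or $\mathrm{SO}(3)$ and hence a non-nilpotent finite subgroup ($2.A_4$ or $A_4$), which contradicts your first paragraph. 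So your route can be completed, but only by importing more of the structure theory of compact connected groups than the paper uses, and the justifications you give for the crucial lifting step are not yet correct.
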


 \bp
 By Lemma~\ref{l-div}, for any $x,g\in G_0$ there is $n(x,g)\in \N$ such that \begin{equation}\label{e-sandw}
 [[g,{}_nx],g]=1
 \end{equation}
 for all $n\geq n(x,g)$. This property is obviously inherited by any section of $G_0$.

 We need to show that $G_0=Z(G_0)$. Suppose the opposite, and let $S$ be a nontrivial simple compact Lie group that is a subgroup of $G_0/Z(G_0)$. Let $F$ be any finitely generated subgroup of $S$. Being a linear group, $F$ is residually finite by Mal'cev's theorem \cite{mal}. In view of relations \eqref{e-sandw}, then $F$ is residually nilpotent by Lemma~\ref{l-sandw}. For any $f\in F$ the right Engel sink ${\mathscr R}_F(f)$ in $F$ is finite. Hence there is a normal subgroup $N$ of $F$ such that $F/N$ is nilpotent and ${\mathscr R}_F(f)\cap N=\{1\}$. Therefore ${\mathscr R}_F(f)=\{1\}$ for any $f\in F$, whence $F$ is an Engel group. Linear Engel groups are locally nilpotent by the Garashchuk--Suprunenko theorem \cite{gar-sup} (see also \cite{grb}), so $F$ is nilpotent. Hence, $S$ is locally nilpotent, a contradiction.
\ep

We proceed with the proof of Theorem~\ref{t-e}.

\begin{lemma}\label{l-eng}
For every $g\in G$ and for any $x\in G_0$ we have
$[x,{}_ng]=1$ for large enough~$n$.
\end{lemma}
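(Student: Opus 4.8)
The plan is to prove the following uniform strengthening: for each $g\in G$ there is an integer $n_0$ such that $[x,{}_ng]=1$ for \emph{all} $x\in G_0$ and all $n\ge n_0$. Fix $g\in G$. Since $G_0$ is abelian (Lemma~\ref{l-abelian}) and normal in $G$, the map $A\colon G_0\to G_0$ given by $A(x)=[x,g]=x^{-1}x^g$ is a continuous endomorphism of $G_0$, and $[x,{}_ng]=A^n(x)$ for all $x\in G_0$ and all $n\ge1$. Thus it suffices to find $n_0$ with $A^{n_0}(G_0)=\{1\}$.

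First I would identify a finite ``set of recurrent values''. Put $H=\langle G_0,g\rangle$; then $H$ is metabelian, since $G_0$ is abelian and normal in $H$ while $H/G_0$ is abelian (being generated by $gG_0$). As $g^{-1}$ is almost right Engel in $G$, it has a finite right Engel sink in $H$, so by Lemma~\ref{l-metab} the element $g$ has a finite left Engel sink ${\mathscr E}_H(g)$ in $H$. By Lemma~\ref{l-sink}, ${\mathscr E}_H(g)=\{z\in H\mid z=[z,{}_ng]\text{ for some }n\ge1\}$, and since $[z,{}_ng]\in H'\le G_0$ for $n\ge1$, every such $z$ lies in $G_0$ and satisfies $A^n(z)=z$. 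Hence $P:={\mathscr E}_H(g)=\{z\in G_0\mid A^n(z)=z\text{ for some }n\ge1\}$ is a \emph{finite} subgroup of $G_0$ (it is a subgroup by Lemma~\ref{l-metab2}(a)), and it is $A$-invariant.

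It then remains to run a Baire-category argument. For each $x\in G_0$ the commutators $[x,{}_ng]=A^n(x)$ eventually lie in the finite right Engel sink ${\mathscr R}(x)$, so they repeat a value: $A^{n_1}(x)=A^{n_1+p}(x)$ for some $n_1,p\ge1$, whence $A^{n_1}(x)\in P$. Therefore $G_0=\bigcup_{n\ge1}C_n$, where $C_n:=(A^n)^{-1}(P)$ is a closed subgroup of $G_0$ (the preimage of the finite, hence closed, subgroup $P$ under the continuous homomorphism $A^n$). As $G_0$ is compact Hausdorff, hence a Baire space, some $C_{n_0}$ has nonempty interior, so $C_{n_0}$ is an open subgroup of the connected group $G_0$, and therefore $C_{n_0}=G_0$. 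Thus $A^{n_0}(G_0)\subseteq P$. Since $A^{n_0}(G_0)$ is a continuous image of the connected group $G_0$, it is connected; being also contained in the finite set $P$, it is trivial. Hence $A^{n_0}(G_0)=\{1\}$, i.e. $[x,{}_ng]=1$ for all $x\in G_0$ and all $n\ge n_0$, which is more than required.

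The delicate point is the finiteness of $P$. One cannot argue inside $G$ directly, because it is not known whether $g$ is almost left Engel in $G$; instead one passes to the metabelian subgroup $H$, where Lemma~\ref{l-metab} turns the finite right Engel sink of $g^{-1}$ into a finite left Engel sink of $g$. After that, the proof only uses routine facts: the Baire category theorem, and that a connected compact group has no proper open subgroup and no nontrivial finite continuous homomorphic image.
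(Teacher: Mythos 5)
Your proof is correct, but the second half takes a genuinely different route from the paper's. The common part is the setup: both you and the authors pass to the metabelian group $H=G_0\langle g\rangle$ and invoke Lemma~\ref{l-metab} to get a \emph{finite} left Engel sink ${\mathscr E}_H(g)$, which by Lemmas~\ref{l-sink} and~\ref{l-metab2}(a) is exactly your finite subgroup $P$ of periodic points of the endomorphism $A\colon x\mapsto [x,g]$ of $G_0$. From there the paper argues algebraically: assuming $P\neq\{1\}$, it picks $z_1\in P$ of prime order $p$, uses the \emph{divisibility} of the connected compact abelian group $G_0$ to extract $p^k$-th roots $z_k$ of $z_1$, and shows via Lemma~\ref{l-metab2}(b) that the sink then contains elements of order $p^{k+1}$ for every $k$, contradicting finiteness. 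You instead argue topologically: writing $G_0=\bigcup_n (A^n)^{-1}(P)$ as a countable union of closed subgroups, Baire category plus connectedness forces $(A^{n_0})^{-1}(P)=G_0$ for some $n_0$, and then connectedness of $A^{n_0}(G_0)$ inside the finite set $P$ gives $A^{n_0}(G_0)=\{1\}$. Your argument buys a strictly stronger, \emph{uniform} conclusion ($[G_0,{}_{n_0}g]=1$ for a single $n_0$ independent of $x$), at the price of using compactness and connectedness of $G_0$ where the paper only needs divisibility (so the paper's version would apply verbatim to any divisible abelian normal subgroup, e.g.\ in a purely abstract setting). One small simplification is available to you: to see that each orbit $A^n(x)$ eventually enters $P$ you appeal to the right Engel sink ${\mathscr R}(x)$ of $x$, which is legitimate under the hypotheses but unnecessary -- the defining property of the left Engel sink ${\mathscr E}_H(g)$ already says that $[x,{}_ng]\in{\mathscr E}_H(g)=P$ for all large $n$, so you need only the almost right Engel property of $g^{-1}$, not of every $x\in G_0$.
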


\begin{proof}
Since $G_0$ is abelian by Lemma~\ref{l-abelian}, the group $H=G_0\langle g\rangle$ is metabelian; therefore $g$ has a finite left Engel sink and ${\mathscr E}_H(g)\subseteq {\mathscr R}_H(g^{-1})$ by Lemma~\ref{l-metab}. The conclusion of the lemma is equivalent to ${\mathscr E}_H(g)=\{1\}$.

Suppose the opposite and choose $1\ne z\in {\mathscr E}_H(g)$. By Lemma~\ref{l-sink} then $z=[z,{}_ng]$ for some $n\geq 1$.
By Lemma~\ref{l-metab2}(a), ${\mathscr E}_H(g)$ is a subgroup contained in $G_0$. Hence we can choose $z_1$ in ${\mathscr E}(g)$ of some prime order $p$. Again by Lemma~\ref{l-sink} we have $z_1=[z_1,{}_mg]$ with $m\geq 1$. In the divisible group $G_0$ for every $k=1,2,\dots $ there is an element $z_k$ such that $z_k^{p^k}=z_1$. We have $y_k= [z_k, {}_{n_k}g]\in {\mathscr E}_H(g)$ for some $n_k\geq 1$.
 Then $y_k^{p^k}= [z_k^{p^k}, {}_{n_k}g]=[z_1,{}_{n_k}g]$, which is an element of the orbit of $z_1$ in ${\mathscr E}_H(g)$ under the mapping $x\to [x,g]$ and therefore has the same order $p=|z_1|$ by Lemma~\ref{l-metab2}(b). Thus, $y_k$ is an element of ${\mathscr E}_H(g)$ of order exactly $p^{k+1}$, for $k=1,2,\dots $. As a result, ${\mathscr E}_H(g)$ is infinite, a contradiction.
\end{proof}

Applying Theorem~\ref{t-eprof} to the profinite group $\bar G=G/G_0$ all elements of which are almost right Engel, we obtain a finite normal subgroup $D$ with locally nilpotent quotient. Then all elements of $\bar G$ are almost left Engel, and $D$ contains all left Engel sinks ${\mathscr E}_{\bar G}(g)$ of elements $g\in\bar G$. Hence $D$ also contains the subgroup $E$ generated by them:
$$
E:=\langle {\mathscr E}_{\bar G}(g)\mid g\in \bar G\rangle\leq D.
$$
Clearly, $E$ is a normal finite subgroup of $\bar G$. Note
that $\bar G/E$ is also locally nilpotent by the Wilson--Zelmanov theorem \cite[Theorem~5]{wi-ze}, since this is an Engel profinite group.

We now consider the action of $\bar G$ by automorphisms on $G_0$ induced by conjugation.

\begin{lemma}\label{l-central}
The subgroup $E$ acts trivially on $G_0$.
\end{lemma}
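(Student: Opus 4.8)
The goal is to show that the finite normal subgroup $E$ of $\bar G = G/G_0$, generated by all left Engel sinks ${\mathscr E}_{\bar G}(g)$, acts trivially by conjugation on the abelian connected component $G_0$. The natural strategy is to reduce the action of $\bar G$ on $G_0$ to a situation governed by the Engel-type control we already have, and then invoke Lemma~\ref{l-eng} together with coprime-action or divisibility arguments. First I would fix an element $e \in {\mathscr E}_{\bar G}(g)$ for some $g \in \bar G$, lift $g$ to an element (still called $g$) of $G$, and note that by Lemma~\ref{l-sink} we have $e = [e,{}_n g]$ in $\bar G$ for some $n \geq 1$; that is, $e$ lies in the image of a long commutator string in $g$. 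The key point to exploit is that $E$ is \emph{finite}, while $G_0$ is divisible (hence has no finite quotients other than the trivial one when one looks at $[G_0, E]$, which is a closed connected—hence divisible—subgroup of the abelian group $G_0$).

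The main line of argument I would pursue: consider $[G_0, E]$, a closed connected subgroup of the abelian group $G_0$, hence itself divisible. I want to show $[G_0, E] = 1$. It suffices to show each generator ${\mathscr E}_{\bar G}(g)$ centralizes $G_0$, and for that it suffices to show $[G_0, e] = 1$ for each $e$ as above. Now work inside the metabelian group $H = G_0 \langle e \rangle$ (using that $G_0$ is abelian by Lemma~\ref{l-abelian}). Here is where I would use the relation $e \equiv [e,{}_n g] \pmod{G_0}$ to express $e$ in terms of commutators and then apply Lemma~\ref{l-eng} (with $g$ in the role of the acting element): for any $x \in G_0$, $[x,{}_m g] = 1$ for large $m$, so $g$ acts as an ``Engel'' automorphism on $G_0$, and since $G_0$ is abelian and divisible, coprimality/divisibility forces $[G_0, g]$ to behave well. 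More precisely, for a prime $p$ dividing the (Steinitz) order of the automorphism induced by $g$ and a $p$-element part, the fixed-point-free-ish behavior on $[G_0, g]$ combined with divisibility gives a contradiction unless $[G_0, g] \cap (\text{relevant part}) = 1$, analogously to the proof of Lemma~\ref{l-eng}. The upshot I expect: $e$, being built from commutators in $g$ modulo $G_0$, acts on $G_0$ trivially once one chases through the metabelian identities, because the ``$[x,{}_m g]=1$'' condition propagates to $[x,{}_m e] = 1$ for all $x \in G_0$, and then a divisibility argument in the spirit of Lemma~\ref{l-eng} upgrades this to $[G_0, e] = 1$.

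I expect the main obstacle to be the bookkeeping needed to pass from ``$e$ equals a long commutator string in $g$ modulo $G_0$'' to genuine control of how $e$ itself acts on $G_0$: one must lift $e$ to $G$, account for the $G_0$-coset ambiguity, and verify that the metabelian commutator identities (of the type used in Lemmas~\ref{l-metab} and \ref{l-eng}) let the Engel condition for $g$ on $G_0$ be transferred to $e$ on $G_0$. A clean way around this is: since $[e,{}_n g] e^{-1} \in G_0$, and $G_0$ is abelian and normal, conjugation by $e$ on $G_0$ agrees with conjugation by $[e,{}_n g]$ on $G_0$; iterating and using that $g$ acts Engel-ly on the divisible abelian $G_0$ (so $[G_0, {}_k g] = 1$ for large $k$ by Lemma~\ref{l-eng}), one concludes the automorphism of $G_0$ induced by $e$ is unipotent of bounded class, hence—being also of finite order, as $E$ is finite, on a divisible abelian group—trivial. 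I would then take the subgroup generated by all such $e$ to conclude $[G_0, E] = 1$.
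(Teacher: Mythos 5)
There is a genuine gap. Your ``clean way around'' rests on two claims: that $g$ (and hence $e$) acts on $G_0$ unipotently \emph{of bounded class}, and that a finite-order automorphism of a divisible abelian group which is pointwise Engel must be trivial. Neither holds. Lemma~\ref{l-eng} is only a pointwise statement: the integer $n$ in $[x,{}_ng]=1$ depends on $x$, so no bound on the ``class'' of the induced automorphism is available, and you cannot conclude $[G_0,{}_kg]=1$ for a fixed $k$. Worse, the weaker pointwise conclusion genuinely does not suffice on the torsion part of $G_0$: take $A=\mathbb{Z}(2^{\infty})$ and $\alpha$ the inversion automorphism. Then $\alpha$ has order $2$ and $[x,{}_n\alpha]=x^{(-2)^n}=1$ for all $n$ large (depending on the order of $x$), yet $\alpha\neq 1$. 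So an element of $E$ could, for all that your argument uses, act as inversion on a Pr\"ufer component of $G_0$. This is exactly why the paper does not argue from the Engel property of $e$ alone: it uses in an essential way that a generator $z$ of $E$ is \emph{recurrent}, $z=[z,{}_ng]$ (Lemma~\ref{l-sink}), builds a finite $\langle z,g\rangle$-invariant $p$-group $B$ inside $A_p$, shows via Lemma~\ref{l-eng} that $\langle z,g\rangle$ acts on $B$ as a $p$-group, and then lets the relation $z=[z,{}_ng]$ in a nilpotent group kill the action of $z$. Your opening observation that conjugation by $e$ agrees on $G_0$ with conjugation by $[e,{}_ng]$ points in this direction, but you never exploit the recurrence, and the iteration you sketch does not by itself produce the needed nilpotent quotient.

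Two smaller points. First, your parenthetical that $[G_0,E]$ is connected is not needed and not obviously true as stated (it is generated by images of connected sets, but you never use it). Second, even if triviality on the torsion part and on the torsion-free quotient were established, there remains the mixed case $a^g=at$ with $a$ of infinite order and $t$ torsion, since the complement $A_0$ need not be $E$-invariant; the paper disposes of this by a separate divisibility argument producing elements $t_k$ of order $p^{k+1}$ dividing the finite order of $g$. Your unipotence claim would have subsumed this step had it been valid, but as it stands this case is also unaddressed.
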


\begin{proof}
The abelian divisible group $G_0$ is a direct product $A_0\times\prod _pA_p$ of a torsion-free divisible group $A_0$ and Sylow subgroups $A_p$ over various primes $p$. Clearly, every Sylow subgroup $A_p$ is normal in $G$.

First we show that $E$ acts trivially on each $A_p$. It is sufficient to show that for every $g\in \bar G$ every element $z\in {\mathscr E}_{\bar G}(g)$ acts trivially on $A_p$. Consider the action of $\langle z, g\rangle$ on $A_p$. Note that $\langle z, g\rangle=\langle z^{\langle g\rangle}\rangle\langle g\rangle$, where $\langle z^{\langle g\rangle}\rangle$ is a finite $g$-invariant subgroup, since it is contained in the finite subgroup $E$. For any $a\in A_p$ the subgroup
$$
\langle a^{\langle g\rangle}\rangle=\langle a,[a,g], [a,g,g],\dots \rangle
$$
is a finite $p$-group by Lemma~\ref{l-eng}, and this subgroup is $g$-invariant.
 Its images under the action of elements of the finite group $\langle z^{\langle g\rangle}\rangle$ generate a finite $p$-group $B$, which is $\langle z, g\rangle$-invariant. Lemma~\ref{l-eng} implies that the image of $\langle z, g\rangle$ in its action on $B$ must be a $p$-group. Indeed, otherwise this image contains a $p'$ element $y$ that acts non-trivially on the Frattini quotient $V=B/\Phi (B)$. Then $V=[V,y]$ and $C_V(y)=1$, whence $[V,y]=\{[v,g]\mid v\in [V,y]\}$ and therefore also $[V,y]=\{[v,{}_ny]\mid v\in [V,y]\} $ for any $n$, contrary to Lemma~\ref{l-eng}. But since $z$ is an element of ${\mathscr E}_{\bar G}(g)$, by Lemma~\ref{l-sink} we have $z=[z,g,\dots ,g]$ with at least one occurrence of $g$. Since a finite $p$-group is nilpotent, this implies that the image of $z$ in its action on $B$ must be trivial. In particular, $z$ centralizes $a$. As a result $E$ acts trivially on $A_p$, for every prime $p$.

 We now show that $E$ also acts trivially on the quotient $W=G_0/\prod _pA_p$ of $G_0$ by its torsion part. Note that $W$ can be regarded as a vector space over ${\mathbb Q}$. Every element $g\in E$ has finite order and therefore by Maschke's theorem $W=[W,g]\times C_W(g)$. If $[W,g]\ne 1$, then $[W,g]=\{[w,{g,\dots ,g}]\mid w\in [W,g]\} $ with $g$ repeated $n$ times, for any $n$. This contradicts Lemma~\ref{l-eng}.

 Thus, $E$ acts trivially both on $W$ and on $\prod _pA_p$. Then any automorphism of $G_0$ induced by conjugation by $g\in E$ acts on every element $a\in A_0$ as $a^g=at$, where $t=t(a)$ is an element of finite order in $G_0$. Since $a^{g^i}=at^i$, the order of $t$ must divide the order of $g$. Assuming the action of $E$ on $G_0$ to be non-trivial, choose an element $g\in E$ acting on $G_0$ as an automorphism of some prime order $p$. Then there is $a\in A_0$ such that $a^g=at$, where $t\in G_0$ has order $p$. For any $k=1,2,\dots $ there is an element $a_k\in A_0$ such that $a_k^{p^k}=a$. Then $a_k^g=a_kt_k$, where $t_k^{p^k}=t$. Thus $|t_k|=p^{k+1}$, and therefore $p^{k+1}$ divides the order of $g$, for every $k=1,2,\dots$. We arrived at a contradiction since $g$ has finite order.
\end{proof}

\begin{lemma}\label{l-left}
 Every element of $G$ has finite left Engel sink.
\end{lemma}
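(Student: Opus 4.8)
The plan is to produce, for each $g\in G$, an explicit finite set containing $[x,{}_ng]$ for all $x\in G$ and all sufficiently large $n$. Let $\tilde E$ denote the full inverse image in $G$ of the finite normal subgroup $E\leq\bar G=G/G_0$; then $\tilde E$ is a closed normal subgroup of $G$ with $G_0\leq\tilde E$, the quotient $\tilde E/G_0\cong E$ is finite, $G/\tilde E\cong\bar G/E$ is locally nilpotent and hence an Engel group, and $G_0\leq Z(\tilde E)$ by Lemma~\ref{l-central}. Fix $g\in G$, let $\alpha$ be the automorphism of the abelian group $G_0$ induced by conjugation by $g$, and let $\phi\colon\tilde E\to\tilde E$ be the map $z\mapsto[z,g]$, so that $\phi^k(z)=[z,{}_kg]$. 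Note first that since $G/\tilde E$ is an Engel group, any $z$ with $z=[z,{}_kg]$ for some $k\geq1$ has trivial image in $G/\tilde E$ (that image is fixed by iterated commutation with $\bar g$ yet eventually trivial); hence every relevant commutator will eventually lie in $\tilde E$, and the analysis takes place there.

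The key preliminary step, which I expect to be the main obstacle, is to upgrade the pointwise statement of Lemma~\ref{l-eng} to a uniform one: \emph{there is a positive integer $N$ with $[G_0,{}_Ng]=1$}, i.e.\ $(\alpha-1)^N=0$ on $G_0$. For this I would consider the closed subgroups $K_n=\{x\in G_0\mid[x,{}_ng]=1\}=\ker(\alpha-1)^n$, which form an increasing chain with $\bigcup_nK_n=G_0$ by Lemma~\ref{l-eng}. Since $G_0$ is compact Hausdorff, hence a Baire space, some $K_{n_0}$ has nonempty interior; being a subgroup, $K_{n_0}$ is then open of finite index, and so is every $K_n$ with $n\geq n_0$. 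Thus $\{K_n\mid n\geq n_0\}$ is an increasing open cover of the compact group $G_0$, and therefore $K_N=G_0$ for some $N$. The point is that Lemma~\ref{l-eng} alone gives no uniform bound; it is compactness, not a purely algebraic manipulation, that forces one.

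With $N$ in hand the rest is a commutator computation exploiting the special position of $G_0$. Because $G_0$ is $g$-invariant and central in $\tilde E$, for $z_0\in\tilde E$ and $a\in G_0$ one has $[z_0a,g]=[z_0,g]^a[a,g]=[z_0,g]\cdot a^{\alpha-1}$, and iterating (applying the same identity with $z_0$ replaced by $\phi^{k}(z_0)\in\tilde E$) gives $\phi^k(z_0a)=\phi^k(z_0)\cdot a^{(\alpha-1)^k}$ for all $k\geq1$. For $k\geq N$ this says $\phi^k(z_0a)=\phi^k(z_0)$, so $\phi^k$ is constant on every coset of $G_0$ in $\tilde E$; consequently $\phi^k(\tilde E)\subseteq\phi^N(\tilde E)$ for all $k\geq N$ and $|\phi^N(\tilde E)|\leq|\tilde E/G_0|=|E|$. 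Now let $x\in G$ be arbitrary. Since $G/\tilde E$ is an Engel group, $[x,{}_ng]\in\tilde E$ for all $n\geq n_1(x,g)$; writing $w=[x,{}_{n_1}g]\in\tilde E$ we get $[x,{}_{n_1+j}g]=\phi^j(w)$, whence $[x,{}_ng]\in\phi^N(\tilde E)$ for every $n\geq n_1(x,g)+N$. Thus the finite set $\phi^N(\tilde E)=\{[z,{}_Ng]\mid z\in\tilde E\}$ is a left Engel sink of $g$, which proves the lemma (the bound $|E|$ on its cardinality being available but not needed here).
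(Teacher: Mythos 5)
Your proof is correct, but its second half runs along a genuinely different track from the paper's. You agree with the paper on the set-up: both proofs work in the full preimage $F=\tilde E$ of $E$, push an arbitrary $x$ into it using local nilpotency of $G/\tilde E$, and then must show that further commutation with $g$ lands in a finite set. At that point the paper applies Heineken's identity \eqref{r-l} to rewrite $[y,{}_{n+1}g]$ as $[g^{-1},{}_ng^{y^{-1}}]^{yg}$, so the tail falls into conjugates ${\mathscr R}(g^{-1})^{fg}$ of the right Engel sink of $g^{-1}$, of which there are finitely many because Lemma~\ref{l-central} makes $F$ centre-by-finite. You instead first uniformize Lemma~\ref{l-eng} into $[G_0,{}_Ng]=1$ via a Baire-category/compactness argument on the closed subgroups $K_n=\ker(\alpha-1)^n$ (valid; in fact, since $G_0$ is connected, $K_{n_0}$ open already forces $K_{n_0}=G_0$), and then a direct commutator computation using $G_0\leq Z(\tilde E)$ shows that $\phi^N$ is constant on cosets of $G_0$, so $\phi^N(\tilde E)$ is a sink of size at most $|E|$. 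What each route buys: the paper's argument is purely algebraic at this stage and reuses the hypothesis through ${\mathscr R}(g^{-1})$, whereas yours needs an extra topological input but dispenses with formula \eqref{r-l} entirely and yields an explicit uniform bound $|{\mathscr E}(g)|\leq|E|$ independent of $g$. All the individual steps you use (closedness of $K_n$, normality of $\tilde E$, the identity $[z_0a,g]=[z_0,g]^a[a,g]$ with $[z_0,g]^a=[z_0,g]$) check out.
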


\bp
Let $F$ be the full inverse image of $E$ in $G$. Given an element $g\in G$, for any $x\in G$ there is $m=m(x,g)$ such that $y=[x,{}_{m}g]\in F$, since $G/F$ is locally nilpotent. Recalling formula \eqref{r-l} we have
$$
[x,{}_{m+n+1}g]=[y,{}_{n+1}g]=[g^{-1},{}_ng^{y^{-1}}]^{yg}.
$$
For large enough $n$, the right-hand side belongs to ${\mathscr R}(g^{-1})^{yg}$, and therefore to the union $\bigcup _{f\in F}{\mathscr R}(g^{-1})^{fg}$ (which is independent of $x$). By Lemma~\ref{l-central}, $|F/Z(F)|$ is finite and therefore there are only finitely many different conjugates ${\mathscr R}(g^{-1})^{f}$ of the finite subset ${\mathscr R}(g^{-1})\subseteq F$. Therefore the union $\bigcup _{f\in F}{\mathscr R}(g^{-1})^{fg}$ is a finite left Engel sink for $g$.
\ep

Thus, all elements of the compact group $G$ have finite left Engel sinks, and therefore by the main result of \cite{khu-shu162} the group $G$ has a finite normal subgroup with locally nilpotent quotient. The proof of Theorem~\ref{t-e} is complete.
 \end{proof}

\begin{corollary}
\label{c-em}
Let $G$ be a compact group such that for some positive integer $m$ every element has a finite right Engel sink ${\mathscr R}(g)$ of cardinality at most $m$. Then $G$ has a finite normal subgroup $N$ of order bounded in terms of $m$ such that $G/N$ is locally nilpotent.
\end{corollary}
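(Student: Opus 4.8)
The plan is to reprove Theorem~\ref{t-e}, this time keeping track of the orders of the finite subgroups and sinks produced at each step, and to check that under the hypothesis $|{\mathscr R}(g)|\leq m$ for all $g\in G$ these are all $m$-bounded. The profinite case is immediate from Theorem~\ref{t-finite}: the image of ${\mathscr R}(g)$ in a finite quotient $G/N$ is the right Engel sink of $gN$, so every right Engel sink in $G/N$ has cardinality at most $m$, whence $|\gamma_\infty(G/N)|$ is $m$-bounded; passing to the inverse limit over open normal subgroups $N$, the subgroup $\gamma_\infty(G)$ is finite of $m$-bounded order, while $G/\gamma_\infty(G)$ is pronilpotent and therefore locally nilpotent by Lemma~\ref{l-p-n}.

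For a general compact group $G$, I would run through the proof of Theorem~\ref{t-e} with $\bar G=G/G_0$, which is profinite and inherits the bound $m$ (again since right Engel sinks of quotients are images of right Engel sinks). The three structural lemmas used there are qualitative and apply verbatim: $G_0$ is abelian (Lemma~\ref{l-abelian}), $[x,{}_ng]=1$ for large $n$ whenever $x\in G_0$ (Lemma~\ref{l-eng}), and the subgroup $E=\langle{\mathscr E}_{\bar G}(h)\mid h\in\bar G\rangle$ acts trivially on $G_0$ (Lemma~\ref{l-central}). The one place where quantitative input is needed here is the bound on $|E|$: since each ${\mathscr E}_{\bar G}(h)$ maps to $\{1\}$ in every finite nilpotent quotient of $\bar G$, we have $E\leq\gamma_\infty(\bar G)$, and the profinite case applied to $\bar G$ shows that $|\gamma_\infty(\bar G)|$, and hence $|E|$, is $m$-bounded.

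It remains to bound the left Engel sinks of $G$. Let $F$ be the full inverse image of $E$ in $G$; since $E$ centralizes $G_0$ and $G_0$ is abelian, $G_0\leq Z(F)$, so $|F/Z(F)|\leq|F/G_0|=|E|$ is $m$-bounded. The proof of Lemma~\ref{l-left} exhibits, for each $g\in G$, the finite set $\bigcup_{f\in F}{\mathscr R}(g^{-1})^{fg}$ as a left Engel sink of $g$; as ${\mathscr R}(g^{-1})\subseteq F$ and conjugation by $Z(F)$ fixes this set pointwise, the union runs over at most $|F/Z(F)|$ conjugates of ${\mathscr R}(g^{-1})$, each of cardinality at most $m$, so $|{\mathscr E}(g)|\leq m\,|F/Z(F)|$ is $m$-bounded, uniformly in $g$. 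Thus $G$ is a compact group all of whose minimal left Engel sinks have $m$-bounded cardinality, and the quantitative form of the main theorem of \cite{khu-shu162} produces a finite normal subgroup $N$ of $m$-bounded order with $G/N$ locally nilpotent. The only delicate point is this bookkeeping itself: every reduction in the proof of Theorem~\ref{t-e} is either bound-free (Lemmas~\ref{l-abelian}, \ref{l-eng} and \ref{l-central}) or feeds into one of the two genuinely quantitative results --- Theorem~\ref{t-finite} for $\bar G$ and the quantitative version of \cite{khu-shu162} at the end --- and the only new observation needed to keep the chain effective is the inclusion $G_0\leq Z(F)$, which bounds $|F/Z(F)|$ by $|E|$.
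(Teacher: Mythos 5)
Your proof is correct, but it takes a genuinely different route from the paper's. The paper's own proof of Corollary~\ref{c-em} is much shorter: it uses the qualitative Theorem~\ref{t-e} as a black box to conclude that $G$ is finite-by-(locally nilpotent), observes that every abstract finitely generated subgroup $H$ is then finite-by-nilpotent and residually finite, applies Theorem~\ref{t-finite} to the finite quotients of $H$ to get $|\gamma_{\infty}(H)|\leq f(m)$, and finishes with a standard inverse limit argument over the finitely generated subgroups. You instead re-run the proof of Theorem~\ref{t-e} with quantitative bookkeeping: the profinite case via Theorem~\ref{t-finite} and Lemma~\ref{l-res}(b) (which matches the paper's remark that the profinite case is immediate), the inclusion $E\leq\gamma_{\infty}(\bar G)$ to bound $|E|$, the observation $G_0\leq Z(F)$ to bound $|F/Z(F)|$ by $|E|$, and the resulting uniform bound $|{\mathscr E}(g)|\leq m\,|F/Z(F)|$ on the left Engel sinks, after which you invoke the quantitative form of the main theorem of \cite{khu-shu162}. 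All of these steps check out (the one unstated point, ${\mathscr R}(g^{-1})\subseteq F$, follows because $\bar G/E$ is locally nilpotent so right Engel sinks of $\bar G$ lie in $E$; the paper's Lemma~\ref{l-left} makes the same unargued assertion, so this is not a new gap). What your route buys is the extra information that the minimal left Engel sinks of $G$ are uniformly $m$-bounded in cardinality --- a quantitative strengthening of the reduction used at the end of the proof of Theorem~\ref{t-e}, and of independent interest given the remark in the introduction that it is unclear whether almost right Engel implies almost left Engel elementwise. What the paper's route buys is economy: it never reopens the proof of Theorem~\ref{t-e} and needs only the finite-group Theorem~\ref{t-finite} plus an inverse limit argument.
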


\begin{proof}
By Theorem~\ref{t-e} the group $G$ is finite-by-(locally nilpotent). Therefore every abstract finitely generated subgroup $H$ of $G$ is finite-by-nilpotent and residually finite. By Theorem~\ref{t-finite}, every finite quotient of $H$ has nilpotent residual of $m$-bounded order. Hence $\gamma _{\infty}(H)$ is also finite of $m$-bounded order $\leq f(m)$, and $H/\gamma _{\infty}(H)$ is nilpotent. Standard properties of nilpotent groups ensure that $H$ has only finitely many normal subgroups of order $\leq f(m)$ with nilpotent quotient. Then the standard inverse limit argument can be applied, by which the group $G$ has a normal subgroup of $m$-bounded order $\leq f(m)$ with locally nilpotent quotient.
\end{proof}

\section*{Acknowledgements}
The authors thank Gunnar Traustason and John Wilson for stimulating discussions.

The first author was supported by the Russian Science Foundation, project no. 14-21-00065,
and the second by FAPDF, Brazil. The first author thanks CNPq-Brazil and the University of Brasilia for support and hospitality that he enjoyed during his visits to Brasilia.

\end{document}